\newtheorem{theorem}{Theorem}[section]
\newtheorem{lemma}[theorem]{Lemma}
\newtheorem{proposition}[theorem]{Proposition}
\theoremstyle{definition}
\newtheorem{definition}[theorem]{Definition}
\newtheorem{remark}[theorem]{Remark}
\newcommand{\R}{\mathbb{R}}
\title[Running head]
{Infinitely many solutions for a class of fractional Schr\"{o}dinger
 equations coupled with neutral
 scalar field} 
\author[Liejun Shen, Marco Squassina and Xiaoyu Zeng]{}
\subjclass{Primary: 35J60,~35Q55; Secondary: 53C35.}
\keywords{Fractional Schr\"{o}dinger equation, Strongly indefinite, Bessel operator,
Fountain theorem, Infinitely many soultions, Variational methods.}
\thanks{L. Shen is partially supported by NSFC (Grant No. 12201565).
M.\  Squassina  is  member  of  Gruppo  Nazionale  per
	l'Analisi  Matematica,  la Probabilita  e  le  loro  Applicazioni  (GNAMPA)  of  the  Istituto  Nazionale  di  Alta  Matematica  (INdAM).
X. Zeng is supported by CSC and NSFC (Grant Nos.
 12322106, 12171379, 12271417).
}
\thanks{$^*$Corresponding author: Marco Squassina}
\begin{document}
\maketitle

\centerline{\scshape
Liejun Shen$^{{\href{mailto:ljshen@zjnu.edu.cn}{\textrm{\Letter}}}1}$,
Marco Squassina$^{{\href{mailto:marco.squassina@unicatt.it}{\textrm{\Letter}}}*2}$ and
Xiaoyu Zeng$^{{\href{mailto:xyzeng@whut.edu.cn}{\textrm{\Letter}}}3}$}

\medskip

{\footnotesize
 \centerline{$^1$Department of Mathematics, Zhejiang
Normal University, Jinhua, Zhejiang, 321004, People's Republic of China}
} 

\medskip

{\footnotesize
 \centerline{$^2$Dipartimento di Matematica e Fisica
	Universit\`a Cattolica del Sacro Cuore,
	Via della Garzetta 48, 25133, Brescia, Italy}

\medskip

{\footnotesize
 \centerline{$^3$Centre for Mathematical Sciences,
Wuhan University of Technology, Wuhan, Hubei, 430070, People's Republic of China}
}

}

\bigskip

 \centerline{(Communicated by Handling Editor)}


\begin{abstract}
We study the fractional Schr\"{o}dinger
 equations coupled with a neutral
 scalar field
 $$
 \left\{%
\begin{array}{ll}
   (-\Delta)^s u+V(x)u=K(x)\phi u +g(x)|u|^{q-2}u, & x\in \mathbb{R}^3, \\
    (I-\Delta)^t \phi=K(x)u^2, &  x\in \mathbb{R}^3,\\
\end{array}%
\right.
 $$
where $(-\Delta)^s$ and $(I-\Delta)^t$ denote the fractional Laplacian and Bessel operators with
$\frac{3}{4} <s<1$ and $0<t<1$, respectively. Under some suitable assumptions for the external potentials
$V$, $K$ and $g$, given $q\in(1,2)\cup(2,2_s^*)$ with
$2_s^*:= \frac{6}{3-2s}$,
 with the help of an improved Fountain theorem dealing with a class of strongly
indefinite variational problems approached by
Gu-Zhou [Adv. Nonlinear Stud., {\bf17} (2017), 727--738],
 we show that the system admits infinitely many nontrivial solutions.
\end{abstract}


\section{Introduction and main results}

\subsection{General overview}
In the present paper, we are concerned with the following fractional elliptic system
\begin{equation}\label{mainequation}
  \left\{%
\begin{array}{ll}
   (-\Delta)^s u+V(x)u=K(x)\phi u +g(x)|u|^{q-2}u, & x\in\R^3, \\
    (I-\Delta)^t \phi=K(x)u^2, &  x\in\R^3,\\
\end{array}%
\right.
\end{equation}
where $(-\Delta)^s$ and $(I-\Delta)^t$ denote the classic fractional Laplacian and Bessel operators with
$\frac{3}{4} <s<1$ and $0<t<1$, respectively.

In light of its relevance in physics, the following
nonlinear fractional Schr\"{o}dinger equation
\begin{equation}\label{fSe}
i \frac{\partial\Psi}{\partial t}= (-\Delta)^{s}\Psi+(V(x)+E)\Psi-f(x,\Psi)\quad\text{for all }x\in\mathbb{R}^{N},
\end{equation}
 where $N>2s$ with $s\in(0,1)$, $E\in\mathbb{R}$, $V$ and $f$ are continuous functions,
 has been received more and more attentions in recent years by a great many mathematicians.
Generally, when they are searching for a particular type of the solutions of
 Eq. \eqref{fSe}, the so called \emph{standing wave solution}, which carries a form of the type
 $$
 \Psi(z,t)=\exp(-iEt)u(z),
 $$
reveals that $u$ acts as a solution of the fractional elliptic equation
\begin{equation}\label{Se}
 \left.\left\{\begin{matrix} (-\Delta)^su+V(x)u=f(x,u)&\text{in}~\mathbb{R}^N,\\
 u\in H^s(\mathbb{R}^N),~u>0,&\text{on}~\mathbb{R}^N.\end{matrix}\right.\right.
\end{equation}
 In the local case, that is, the general semilinear elliptic equations
 \eqref{Se} with $s = 1$, have been extensively considered,
for example, we shall refer the reader to \cite{Rabinowitz,SW2,AS} and their references therein.

In the nonlocal case, namely when $s\in (0, 1)$, the corresponding results for Eq. \eqref{Se}
do never
seem to be as fruitful as the local ones. This occurs
maybe because the techniques and arguments developed for local case cannot be adapted immediately,
c.f. \cite{SSecchi}. In order to introduce some results clearly for Eq. \eqref{fSe},
we recall that, for any $s\in(0, 1)$, the fractional Sobolev space $H^s(\R^N)$ is defined by
$$
H^s(\mathbb{R}^N)=\left\{u\in L^2(\mathbb{R}^N):\int_{\mathbb{R}^N}\int_{\mathbb{R}^N}\frac{(u(x)-u(y))^2}{|x-y|^{N+2s}}dxdy<\infty\right\},
$$
equipped with the norm
$$
\|u\|_{H^s(\mathbb{R}^N)}=\left(\|u\|_{L^2(\mathbb{R}^N)}^2+
\int_{\mathbb{R}^N}\int_{\mathbb{R}^N}\frac{(u(x)-u(y))^2}{|x-y|^{N+2s}}dxdy\right)^{1/2}.
$$
The fractional Laplacian, $(-\Delta)^su$ , of a smooth function $u:\mathbb{R}^N\to\mathbb{R}$, with sufficient decay, is defined by
$$
\mathcal{F}((-\Delta)^{s}u)(\xi)=|\xi|^{2s}\mathcal{F}(u)(\xi),\quad\xi\in\mathbb{R}^{N},
$$
where $\mathcal{F}$ denotes the Fourier transformation which is
$$
{\mathcal F}(\phi)(\xi)=\frac{1}{(2\pi)^{\frac{N}{2}}}\int_{\mathbb{R}^{N}}e^{-i\xi\cdot x}\phi(x)dx:=\widehat{\phi}(\xi),
$$
for functions $\phi$ belonging to Schwartz class.
In reality, according to \cite[Lemma 3.2]{DiNezzaPalatucciValdinoci}
the fractional Laplacian operator can be equivalently represented  as
$$
(-\Delta)^{s}u(x)=-\frac{1}{2}C(N,s)\int_{\mathbb{R}^{N}}\frac{u(x+y)+u(x-y)-2u(x)}{|y|^{N+2s}}dy,\quad\forall x\in\mathbb{R}^{N},
$$
 where
$$
C(N,s)=\left(\int_{\mathbb{R}^{N}}\frac{(1-\cos\xi_{1})}{|\xi|^{N+2s}}d\xi\right)^{-1},\quad
\xi=(\xi_{1},\xi_{2},\ldots,\xi_{N})\in\mathbb{R}^{N}.
$$
Also, due to \cite[Propostion 3.4, Propostion 3.6]{DiNezzaPalatucciValdinoci}, it holds that
$$\|(-\Delta)^{\frac{s}{2}}u\|_{L^2(\mathbb{R}^N)}^2=\int_{\mathbb{R}^N}|\xi|^{2s}|\widehat{u}|^2d\xi=\frac{1}{2}C(N,s)
\int_{\mathbb{R}^N}\int_{\mathbb{R}^{N}}\frac{(u(x)-u(y))^2}{|x-y|^{N+2s}}dxdy,$$
for all $u\in H^s(\mathbb{R}^N)$. Moreover, we usually identify these two quantities by omitting the
normalization constant $\frac{1}{2}C(N,s)$ just for simplicity.
The homogeneous fractiotal Sobolev space $D^{s,2}(\mathbb{R}^N)$ is defined by
$$
D^{s,2}(\mathbb{R}^{N})=\left\{u\in L^{2_{s}^{*}}(\mathbb{R}^{N}):|(-\Delta)^{\frac{s}{2}}u|\in L^{2}(\mathbb{R}^{N})\right\}
$$
which is the completion of $C_0^\infty(\mathbb{R}^N)$ under the norm
$$
\|u\|_{D^{s,2}(\mathbb{R}^{N})}=\left(\int_{\mathbb{R}^{N}}|(-\Delta)^{\frac s2}u|^{2}dx\right)^{\frac12}.
$$
For $N>2s$, from \cite[Theorem 6.5]{DiNezzaPalatucciValdinoci},
 we further know that, for any $p\in[2,2_s^*]$, there exists a constant
$C_p>0$ such that
$$
\|u\|_{L^p(\mathbb{R}^N)}\leq C_p\|u\|_{H^s(\mathbb{R}^N)},~\text{for all}~u\in H^s(\mathbb{R}^N).
$$
Besides, the imbedding $H^s(\mathbb{R}^N) \hookrightarrow L^p_{\text{loc}}(\mathbb{R}^N)$
is compact for all $1\leq p<2_s^*$.

Actually, problem \eqref{Se} was initially proposed by the author in \cite{Laskin1,Laskin2} as a result
of expanding the Feynman path integral, from the Brownian-like to the L\'{e}vy-like
quantum mechanical paths. According to the celebrated paper \cite{Caffarelli},  Eq. \eqref{fSe} and its variants
has been widely contemplated by many authors, specially on the existence of ground state solutions,
positive solutions, sign-changing solutions and multiplicity of standing wave solutions.

For $t>0$, the so-called \emph{Bessel function space} in $\R^3$ is defined by
\[
L^{t,2}(\R^3):=\left\{f\in L^2(\R^3): f=G_t\ast h~\text{for some}~h\in L^2(\R^3)
\right\},
 \]
 where the Bessel convolution kernel is
\begin{equation}\label{Yukawa}
  G_{t}(x):=  \frac{1}{(4\pi)^{\frac{t}{2}} \Gamma(\frac{t}{2})}
\int_{0}^{\infty}\exp\bigg(
-\frac{\pi}{\delta}|x|^2
\bigg)\exp\bigg(-\frac{\delta}{4\pi}\bigg)\delta^{\frac{t-5}{2}}d\delta.
\end{equation}
The operator $(I-\Delta)^{-t}u=G_{2t}\ast u$ is generally known as
Bessel operator of order $t$ and it induces Bessel function space
equipped with the norm $\|f\|_{L^{t,2}(\R^3)}=\|h\|_{L^2(\R^3)}$
if $f=G_t\ast h$. Owing to the view point of Fourier transformation,
this same operator can also read
$$
 G_t=\mathcal{F}^{-1}\circ\left(\left(1+|\xi|^2\right)^{-\frac{t}{2}}\circ\mathcal{F}\right),
$$
so that
$$\|f\|_{L^{t,2}(\R^3)}=\left\|(I-\Delta)^{\frac{t}{2}}f\right\|_{L^2(\R^3)}.
$$
We refer the interested reader to \cite{AdamsHedberg,Stein} and the references therein
for more detailed information
concerning the Bessel operator and Bessel function space.

It should be mentioned here that authors in \cite{FallFelli} introduced
the pointwise formula
$$
(I-\Delta)^{t}u(x)=c_{s}\: \int_{\mathbb{R}^{3}}\frac{u(x)-u(y)}{|x-y|^{\frac{3+2t}{2}}}\mathcal{K}_{\frac{3+2t}{2}}(|x-y|)\:dy+u(x)
$$
for $u\in C_c^2(\R^3)$,
where $K_\nu$ is the modified Bessel
function of the second kind with order $\nu$. Nevertheless, a
closed formula for $K_\nu$ remains unknown, see e.g. \cite[Remark 7.3]{FallFelli}.

Very recently, Felmer-Vergara \cite{Felmer2}
investigated the existence of positive solutions for
fractional equations involving a Bessel operator
\begin{equation}\label{Bessel}
 (I-\Delta)^s u+V(x)u=f(x,u),~ x\in\R^N.
\end{equation}
Subsequently, under different technical assumptions on $V$ and $f$ owing to variational methods,
there are some bibliographies in the study of \eqref{Bessel}, see
\cite{Secchi1,Ikoma,Secchi2} and their references therein for example.

In \cite{Shen}, the author contemplated
the multiplicity and concentration of nontrivial solutions
for the following fractional Schr\"{o}dinger-Poisson
 system involving a Bessel operator
\[
\left\{%
\begin{array}{ll}
    (I-\Delta)^s u+\lambda V(x)u+\phi u= f(x,u)+g(x)|u|^{q-2}u, & x\in\R^3, \\
    (-\Delta)^t \phi=u^2, &  x\in\R^3,\\
\end{array}%
\right.
\]
where $\lambda V$ represents as a deepening potential with $\lambda>0$,
$f\in C^0(\R^3\times\R)$ satisfies some suitable conditions,
$g>0$ is a weight function and $1<q<2$.

Motivated by the relevance of problem \eqref{Se} and the mathematical point of view,
we tend to consider a class of Schr\"{o}dinger
equations coupled with a Bessel operator.
More precisely, we shall establish the existence of infinitely many nontrivial solutions for
the system \eqref{mainequation} under some suitable assumptions on $V$, $K$ and $g$.
Up to the best knowledge of us, it seems the first time to dispose of such type of problems.
What's more, we anticipate that the results in this  paper would prompt the further studies on (fractional)
Schr\"{o}dinger-Poisson systems.

\subsection{Assumptions and  main results}
To arrive at the aim mentioned above, we shall suppose that
\begin{itemize}
   \item[$(H_1)$] $V\in C^0(\R^3)\cap L^\infty(\R^3)$ and 0 lies in a spectrum gap of the operator
   $(-\Delta)^s+V$;
   \item[$(H_2)$] $K\in L^{\frac{3}{4s-3}}(\R^3)\cap L^\infty(\R^3)$ and
   $K(x)>0$ a.e. for $x\in\R^3$;
 \end{itemize}

 Now, the main results in this paper can be stated as follows.

 \begin{theorem}\label{maintheorem1}
 Let $\frac{3}{4}<s<1$ and $0<t<1$ and
suppose that $(H_1)-(H_2)$.
If in addition
\begin{itemize}
   \item[$(H_3)$] $g\in L^{q_0}(\R^3)$ with
   $q_0=\frac{6}{6-q(3-2s)}$ and $1<q<\frac{4}{3}$,
\end{itemize}
then system \eqref{mainequation} possesses infinitely many solutions
 $\{u_n,\phi_{u_n}\}\subset H^s(\R^3)\times L^{t,2}(\R^3)$ satisfying
 $\lim\limits_{n\to\infty}\|u_n\|_{H^s(\R^3)}
 =+\infty$.
\end{theorem}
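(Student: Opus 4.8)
The plan is to recast \eqref{mainequation} as a single nonlocal equation by solving the Bessel field explicitly, and then to apply the strongly indefinite fountain theorem of Gu--Zhou to the resulting even energy functional.

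First I would dispose of the second equation. Since $(I-\Delta)^t$ is invertible with positive kernel, for each $u\in H^s(\R^3)$ there is a unique $\phi_u=(I-\Delta)^{-t}(Ku^2)=G_{2t}\ast(Ku^2)\ge 0$ lying in $L^{t,2}(\R^3)$. The decisive estimates are $\|\phi_u\|_{L^{t,2}(\R^3)}\le C\|u\|_{H^s(\R^3)}^2$ and $\int_{\R^3}K\phi_u u^2\,dx\le C\|u\|_{H^s(\R^3)}^4$, which follow by pairing $K\in L^{3/(4s-3)}(\R^3)$ with $u^2\in L^{3/(3-2s)}(\R^3)$ through H\"older's inequality together with the embedding $H^s(\R^3)\hookrightarrow L^{2_s^*}(\R^3)$; it is precisely here that $\tfrac34<s<1$ enters, as it renders the exponent $\tfrac{3}{4s-3}$ finite. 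Writing $L:=(-\Delta)^s+V$ and using $(H_3)$, which gives $\int_{\R^3}g|u|^q\,dx\le\|g\|_{L^{q_0}(\R^3)}\|u\|_{L^{2_s^*}(\R^3)}^q$ since $qq_0'=2_s^*$ for the conjugate exponent $q_0'$, I would introduce the reduced functional
\[
I(u)=\tfrac12\langle Lu,u\rangle-\tfrac14\int_{\R^3}K\phi_u u^2\,dx-\tfrac1q\int_{\R^3}g|u|^q\,dx,
\]
verify that $I\in C^1(H^s(\R^3),\R)$ is even with $I(0)=0$, and check that its critical points correspond exactly to the solutions $(u,\phi_u)$ of \eqref{mainequation}.

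Next I would build the spectral framework dictated by $(H_1)$. Since $0$ lies in a gap of the spectrum of $L$, there is an orthogonal splitting $H^s(\R^3)=H^+\oplus H^-$ on which $\langle Lu,u\rangle=\|u^+\|_L^2-\|u^-\|_L^2$ is positive, respectively negative, definite, with $\|\cdot\|_L$ equivalent to the $H^s$ norm; this makes the quadratic part \emph{strongly indefinite}, which is exactly why a classical mountain pass or fountain scheme is inadequate and the Gu--Zhou machinery is required. Fixing a Hilbert basis of $H^+$, I would form the subspaces $Y_k$ (containing all of $H^-$ and the first $k$ positive modes) and $Z_k$ (the high positive modes) that enter that theorem.

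The main obstacle is the compactness condition $(C)_c$, equivalently the $(PS)^*$ condition relative to the finite dimensional approximations, which is delicate because the embeddings $H^s(\R^3)\hookrightarrow L^p(\R^3)$ are not globally compact. I would split this into boundedness of the relevant sequences — the genuinely subtle point in the indefinite setting, which I would reach by a contradiction argument on the normalized sequence, using $q<2$ to make the sublinear term negligible and the sign and estimate of the quartic term to close the argument — and strong convergence of bounded sequences, which is where $(H_2)$ and $(H_3)$ are crucial: because $K$ and $g$ have finite integrability they decay at infinity, so the nonlinear maps $u\mapsto K\phi_u u$ and $u\mapsto g|u|^{q-2}u$ are compact. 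Concretely I would split $\R^3=B_R\cup B_R^c$, use the compact embedding $H^s(B_R)\hookrightarrow L^p(B_R)$ on the ball and the smallness of the weight tails $\int_{B_R^c}|K|^{3/(4s-3)}\,dx$ and $\int_{B_R^c}|g|^{q_0}\,dx$ outside it. The same decay yields $\alpha_k:=\sup_{u\in Z_k,\,\|u\|=1}\int_{\R^3}|g|\,|u|^q\,dx\to0$, while the focusing quartic term, being negative, is what drives the energy to $-\infty$ along the finite positive modes of $Y_k$. With these ingredients I would verify the two geometric inequalities of the improved fountain theorem — the energy is bounded above, indeed nonpositive, on large spheres of $Y_k$, and stays controlled from below on the $Z_k$ spheres via $\alpha_k$ — and conclude that \eqref{mainequation} admits a sequence of critical points whose linking levels sit in progressively higher spectral modes, forcing $\|u_n\|_{H^s(\R^3)}\to+\infty$.
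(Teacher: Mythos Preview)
Your overall strategy coincides with the paper's: reduce the system via $\phi_u=G_{2t}\ast(Ku^2)$, set up the spectral splitting $X=Y\oplus Z$ for $L=(-\Delta)^s+V$, and apply the Gu--Zhou fountain theorem to the even reduced functional. The reduction, the $C^1$ setting, and the compactness of the nonlinear maps via the integrability of $K$ and $g$ are all handled as in the paper.

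There are, however, two genuine gaps in your sketch.

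\textbf{Boundedness of Palais--Smale sequences.} You propose a ``contradiction argument on the normalized sequence'' and invoke only $q<2$. That is not enough, and it hides precisely the place where the threshold $q<\tfrac43$ in $(H_3)$ is used. The paper does \emph{not} normalize; it argues directly. From $\varphi(u_n)-\tfrac12\varphi'(u_n)[u_n]$ one gets $\Psi(u_n)\le C(1+\|u_n\|_X+\|u_n\|_X^q)$. The key nonlocal estimate is then
\[
\big|\Psi'(u_n)[y_n]\big|\le C\,\Psi(u_n)^{3/4}\|y_n\|_X,
\]
obtained by two successive H\"older inequalities and $\|\phi_{u_n}\|_{L^2}\le C\Psi(u_n)^{1/2}$. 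Testing $\varphi'(u_n)$ against $y_n=Pu_n$ and $z_n=Qu_n$ separately and adding yields $\|u_n\|_X^2\le C\|u_n\|_X^{1+3q/4}+\text{lower order}$, which forces boundedness exactly when $1+\tfrac{3q}{4}<2$, i.e.\ $q<\tfrac43$. A normalization argument does not readily produce this: if $v_n=u_n/\|u_n\|_X$ and you divide the energy identity by $\|u_n\|_X^4$ you only get $\Psi(v_n)\to0$, but in the strongly indefinite setting this does not by itself contradict $\|v_n\|_X=1$, since $v_n\rightharpoonup0$ is perfectly compatible with both facts.

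\textbf{The lower geometry on $Z_k$.} You write that $b_k$ is controlled via $\alpha_k=\sup_{u\in Z_k,\|u\|=1}\int|g||u|^q\to0$. That quantity is harmless here (the $g$-term is subquadratic when $q<2$); the obstruction is the \emph{quartic} term $-\tfrac14\Psi(u)$, which for a fixed radius beats $\tfrac12\|u\|^2$. What the paper actually proves and uses is
\[
\beta_k^{\,1}:=\sup_{u\in Z_k,\ \|u\|_X=1}\Psi(u)\longrightarrow0\quad(k\to\infty),
\]
established via Lemma~\ref{Psi}(ii) (weak continuity of $\Psi$ thanks to $K\in L^{3/(4s-3)}$). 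Choosing $r_k=(\beta_k^{\,1})^{-1/2}\to\infty$ then balances the quadratic and quartic parts and gives $b_k\ge\tfrac18 r_k^q\to\infty$. Without $\beta_k^{\,1}\to0$ your lower bound on $Z_k$ collapses. Similarly, for $a_k\to-\infty$ on $Y_k$ one cannot simply say ``the quartic term drives the energy down'', because $Y_k$ contains the whole infinite-dimensional $Y$; the paper handles this by the dichotomy in Lemma~\ref{Psi}(iii), distinguishing the regimes $\|Qu\|/\|Pu\|<\alpha$ and $\ge\alpha$.
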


\begin{theorem}\label{maintheorem2}
 Let $\frac{3}{4}<s<1$ and $0<t<1$ and
suppose that $(H_1)-(H_2)$.
If in addition
\begin{itemize}
  \item[$(H_4)$] $g\in L^{q_0}(\R^3)$ with $g(x)<0$ a.e. $x\in\R^3$,
   $q_0=\frac{6}{6-q(3-2s)}$ and $\frac{4}{3}\leq q<2$;
\end{itemize}
then system \eqref{mainequation} possesses infinitely many solutions
 $\{u_n,\phi_{u_n}\}\subset H^s(\R^3)\times L^{t,2}(\R^3)$ satisfying
 $\lim\limits_{n\to\infty}\|u_n\|_{H^s(\R^3)}
 =+\infty$.
\end{theorem}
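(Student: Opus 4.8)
The plan is to recast the system \eqref{mainequation} as a single \emph{strongly indefinite} variational problem for $u$ and then invoke the fountain theorem of Gu--Zhou. Since $(I-\Delta)^t$ is an isomorphism onto $L^{t,2}(\R^3)$, for every $u\in H^s(\R^3)$ the second equation has the unique solution $\phi_u=G_{2t}\ast(Ku^2)$, and pairing it with itself gives $\|\phi_u\|_{L^{t,2}(\R^3)}^2=\int_{\R^3}K\phi_u u^2\,dx$. Using $(H_2)$ together with the Bessel--Sobolev embedding $L^{t,2}(\R^3)\hookrightarrow L^{2_t^*}(\R^3)$ and $H^s\hookrightarrow L^{2_s^*}$, the assumption $K\in L^{3/(4s-3)}\cap L^\infty$ (with $s>3/4$) is exactly what makes $u\mapsto\phi_u$ well defined and continuous and renders $u\mapsto\|\phi_u\|_{L^{t,2}}^2$ both of class $C^1$ and \emph{weakly sequentially continuous}. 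Substituting $\phi_u$, solutions of \eqref{mainequation} correspond to critical points of
\[
I(u)=\tfrac12\int_{\R^3}\!\big(|(-\Delta)^{s/2}u|^2+V u^2\big)\,dx-\tfrac14\|\phi_u\|_{L^{t,2}(\R^3)}^2-\tfrac1q\int_{\R^3}g|u|^q\,dx,
\]
which is even because both $\phi_u$ and $|u|^q$ depend only on $|u|$.

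Next I would install the indefinite geometry. By $(H_1)$, $0$ lies in a gap of $\sigma((-\Delta)^s+V)$, so $H^s(\R^3)=E^+\oplus E^-$ with the quadratic form positive (resp.\ negative) definite on $E^+$ (resp.\ $E^-$); renorming by $\|u\|^2=\|u^+\|^2+\|u^-\|^2$ one has $\int(|(-\Delta)^{s/2}u|^2+Vu^2)=\|u^+\|^2-\|u^-\|^2$. With an orthonormal basis $\{e_j\}_{j\ge1}$ of $E^+$ I set $Y_k=E^-\oplus\mathrm{span}\{e_1,\dots,e_k\}$ and $Z_k=\overline{\mathrm{span}}\{e_j:j\ge k\}\subset E^+$, the filtration on which the Gu--Zhou theorem operates; it yields an unbounded sequence of critical values once evenness (done), a Cerami-type compactness condition, and two geometric estimates are verified. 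For the latter, set $\gamma_k:=\sup_{u\in Z_k,\|u\|=1}\|\phi_u\|_{L^{t,2}}^2$; since the map is weakly continuous and $4$-homogeneous and $u_k\in Z_k$ forces $u_k\rightharpoonup0$, one gets $\gamma_k\to0$. On $Z_k$ the form equals $\tfrac12\|u\|^2$ and the weight term is $-\tfrac1q\int g|u|^q=\tfrac1q\int|g||u|^q\ge0$ (here $g<0$ is used), so
\[
I(u)\ge\tfrac12\|u\|^2-\tfrac{\gamma_k}{4}\|u\|^4,\qquad u\in Z_k,
\]
and choosing $r_k=\gamma_k^{-1/2}\to\infty$ gives $\inf_{u\in Z_k,\|u\|=r_k}I(u)\ge\tfrac14\gamma_k^{-1}\to\infty$. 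On $Y_k$, all norms on the $E^+$-part are equivalent and $K>0$ makes the quartic $-\tfrac14\|\phi_u\|^2$ coercive there while $-\tfrac12\|u^-\|^2$ dominates on $E^-$; since the positive term $\tfrac1q\int|g||u|^q$ grows only like $\|u\|^q$ with $q<2$, one obtains $I(u)\to-\infty$ on $Y_k$, hence $\max_{u\in Y_k,\|u\|=\rho_k}I(u)\le0$ for $\rho_k>r_k$ large.

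I expect the Cerami condition to be the main obstacle. For a sequence with $I(u_n)\to c$ and $(1+\|u_n\|)\|I'(u_n)\|\to0$, combining $I(u_n)-\tfrac12 I'(u_n)[u_n]$ with $I'(u_n)[u_n]=o(1)$ yields the single identity $\tfrac14\|\phi_{u_n}\|_{L^{t,2}}^2+(\tfrac1q-\tfrac12)\int|g||u_n|^q=O(1)$; because $q<2$ and $g<0$, the two nonnegative terms are therefore bounded, and then $\|u_n^+\|^2-\|u_n^-\|^2$ is bounded as well. Testing with $u_n^\pm$ and using the weighted Cauchy--Schwarz bound $\big|\int K\phi_{u_n}u_n u_n^\pm\big|\le\|\phi_{u_n}\|_{L^{t,2}}\big(\int K\phi_{u_n}(u_n^\pm)^2\big)^{1/2}$ together with $(H_2)$ turns the coupling and weight contributions into quantities \emph{linear} in $\|u_n^\pm\|$ (exploiting the already-established bounds on $\|\phi_{u_n}\|$ and $\int|g||u_n|^q$), which forces $\|u_n^\pm\|$ to be bounded. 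It is precisely this boundedness step, which fails for general $g$, that the sign condition $g<0$ in $(H_4)$ closes. Strong convergence then follows from the weak continuity of the coupling functional and of $u\mapsto\int|g||u|^q$ (both compact thanks to $K,g$ lying in the stated $L^p$ spaces, which also rules out loss of mass at infinity), combined with the standard $\langle I'(u_n)-I'(u),u_n^\pm-u^\pm\rangle$ splitting that recovers $\|u^+\|^2+\|u^-\|^2$ and gives $u_n\to u$ in $H^s(\R^3)$. Finally, the Gu--Zhou theorem produces critical points $u_n$ with $I(u_n)=c_n\to\infty$; since $I$ maps bounded sets to bounded sets, $\|u_n\|_{H^s(\R^3)}\to\infty$, and the pairs $(u_n,\phi_{u_n})$ are the desired solutions.
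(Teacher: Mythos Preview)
Your overall strategy is the same as the paper's: reduce to the single-variable functional, use the spectral splitting $E^+\oplus E^-$ coming from $(H_1)$, set up the filtration $Y_k,Z_k$, verify a Palais--Smale type compactness via the identity $I(u_n)-\tfrac12 I'(u_n)[u_n]$ (where the sign $g<0$ together with $q<2$ makes both nonlinear contributions nonnegative), and apply the Gu--Zhou fountain theorem. Your $Z_k$-estimate is in fact slightly cleaner than the paper's, since you drop the $g$-term using its sign rather than carrying it as $C\|u\|^q$; and your boundedness argument matches the paper's Case~2 in Lemma~\ref{PSc}.

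There is, however, a genuine gap: you treat the Gu--Zhou theorem as if it were the classical Bartsch fountain theorem, requiring only ``two geometric estimates'' and compactness. Because the negative space $E^-$ is infinite-dimensional, the Gu--Zhou theorem (Proposition~\ref{Gu} here) needs an additional hypothesis $(A_4)$: for every $\sigma>0$ one must have $\sup_{\|u\|_\tau<\sigma}\varphi(u)<\infty$, where $\|\cdot\|_\tau$ is the weak-type norm \eqref{tau} that controls $\|Qu\|_X$ but \emph{not} $\|Pu\|_X$. You never address this, and it is not automatic; it is exactly here that $q<2$ is used again (so that $-\tfrac12\|Pu\|^2+C\|Pu\|^q$ stays bounded above while $\|Pu\|_X$ is unconstrained). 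The paper handles this in Lemma~\ref{linking3}. A second, minor point: the theorem is stated for the $(PS)_c$ condition, not a Cerami condition, so your ``$O(1)$'' should read ``$O(1+\|u_n\|)$'' in the boundedness step; the subsequent testing with $u_n^\pm$ still closes because the resulting exponents $3/4$ and $(q-1)/q$ are strictly below $1$, as in the paper.
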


\begin{theorem}\label{maintheorem3}
 Let $\frac{3}{4}<s<1$ and $0<t<1$ and
suppose that $(H_1)-(H_2)$.
If in addition
\begin{itemize}
 \item[$(H_5)$] $g\in  L^{q_0}(\R^3)$ with $g(x)>0$ a.e. $x\in\R^3$,
   $q_0=\frac{6}{6-q(3-2s)}$ and
   $2<q<2_s^*$.
\end{itemize}
then system \eqref{mainequation} possesses infinitely many solutions
 $\{u_n,\phi_{u_n}\}\subset H^s(\R^3)\times L^{t,2}(\R^3)$ satisfying
 $\lim\limits_{n\to\infty}\|u_n\|_{H^s(\R^3)}
 =+\infty$.
\end{theorem}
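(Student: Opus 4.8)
The plan is to eliminate $\phi$ by the standard reduction and then apply the strongly indefinite Fountain theorem of Gu--Zhou. First, for each fixed $u\in E:=H^s(\R^3)$ I would solve the linear equation $(I-\Delta)^t\phi=Ku^2$; since $K\in L^\infty(\R^3)$ and $u^2$ lies in the relevant dual space (by $(H_2)$, $\frac34<s<1$ and $H^s(\R^3)\hookrightarrow L^{2_s^*}(\R^3)$), this produces a unique $\phi_u=G_{2t}\ast(Ku^2)\in L^{t,2}(\R^3)$ which is nonnegative (the kernel $G_{2t}$ in \eqref{Yukawa} is positive), satisfies $\phi_{\lambda u}=\lambda^2\phi_u$, depends smoothly on $u$, and obeys $\int_{\R^3}K\phi_u u^2\,dx=\|\phi_u\|_{L^{t,2}(\R^3)}^2\le C\|u\|^4$. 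Using $(H_1)$ I would then split $E=E^+\oplus E^-$ according to the positive and negative parts of the self-adjoint operator $(-\Delta)^s+V$, the spectral gap at $0$ making $\pm\langle((-\Delta)^s+V)u^\pm,u^\pm\rangle$ equivalent to $\|u^\pm\|^2$ on the (both infinite-dimensional) subspaces $E^\pm$. Substituting $\phi_u$ into the first equation yields the even functional
\[
I(u)=\tfrac12\big(\|u^+\|^2-\|u^-\|^2\big)-\tfrac14\int_{\R^3}K\phi_u u^2\,dx-\tfrac1q\int_{\R^3}g|u|^q\,dx,\qquad u=u^++u^-,
\]
of class $C^1(E,\R)$, whose critical points $u$ give solutions $(u,\phi_u)$ of \eqref{mainequation}.

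Next I would check the hypotheses of the Gu--Zhou theorem for the splitting $E=E^-\oplus E^+$. Fixing an orthonormal basis $\{e_j\}$ of $E^+$, set $Y_k=E^-\oplus\mathrm{span}\{e_1,\dots,e_k\}$ and $Z_k=\overline{\mathrm{span}\{e_j:j\ge k\}}\subset E^+$, and choose radii $\rho_k>r_k>0$. For the lower estimate on $Z_k$ note that $u=u^+$ there, so $I(u)\ge\tfrac12\|u\|^2-\tfrac14\gamma_k^4\|u\|^4-\tfrac1q\beta_k^q\|u\|^q$, where $\gamma_k=\sup_{u\in Z_k,\|u\|=1}(\int K\phi_u u^2)^{1/4}$ and $\beta_k=\sup_{u\in Z_k,\|u\|=1}(\int g|u|^q)^{1/q}$. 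The compactness furnished by $(H_2)$ and $(H_5)$ forces $\gamma_k\to0$ and $\beta_k\to0$, so choosing $r_k\to\infty$ with $\gamma_k^4r_k^2\to0$ and $\beta_k^qr_k^{q-2}\to0$ gives $\inf_{u\in Z_k,\|u\|=r_k}I(u)\to+\infty$. For the upper estimate on $Y_k$ I would argue by contradiction via normalization: if $u_m\in Y_k$ with $\|u_m\|\to\infty$ and $I(u_m)$ bounded below, then $w_m:=u_m/\|u_m\|$ has $w_m^+$ converging strongly (finite dimension) to some $z$, and either $z\ne0$, in which case $q>2$ together with $g>0$ a.e. and the weak continuity of $u\mapsto\int g|u|^q$ make $\|u_m\|^{q-2}\int g|w_m|^q\to+\infty$, or $z=0$, in which case $\|w_m^-\|\to1$ and the term $-\tfrac12\|u_m^-\|^2$ dominates; both alternatives contradict $I(u_m)\ge-M$. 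Hence $I\to-\infty$ on each $Y_k$ and $\max_{u\in Y_k,\|u\|=\rho_k}I(u)\le0$ for $\rho_k$ large.

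For the compactness condition $(C)^*_c$, take $(u_n)$ with $I(u_n)\to c$ and $(1+\|u_n\|)\|I'(u_n)\|\to0$. The combination
\[
I(u_n)-\tfrac12\langle I'(u_n),u_n\rangle=\tfrac14\int_{\R^3}K\phi_{u_n}u_n^2\,dx+\Big(\tfrac12-\tfrac1q\Big)\int_{\R^3}g|u_n|^q\,dx\longrightarrow c
\]
has both summands nonnegative (here $q>2$, $K,g>0$, $\phi_{u_n}\ge0$), so $\int K\phi_{u_n}u_n^2$ and $\int g|u_n|^q$ stay bounded; testing $I'(u_n)$ against $u_n^+$ and against $u_n^-$ and using these bounds to linearize the right-hand sides then yields $\|u_n^\pm\|\le C$, whence $(u_n)$ is bounded. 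By $(H_2)$ and $(H_5)$ the nonlinear functionals and their derivatives are weakly sequentially continuous, so along a subsequence $u_n\rightharpoonup u$ and the nonlinear parts of $I'(u_n)$ converge; since the quadratic part is, up to sign, the identity on each of $E^\pm$, the usual argument upgrades this to strong convergence in $E$, verifying $(C)^*_c$. Applying the Gu--Zhou theorem then produces an unbounded sequence of critical values and hence solutions $\{(u_n,\phi_{u_n})\}$ with $\|u_n\|_{H^s(\R^3)}\to+\infty$.

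I expect the main obstacle to be the quartic nonlocal term $\int_{\R^3}K\phi_u u^2\,dx$, whose natural bound is $O(\|u\|^4)$ and which, carrying the ``wrong'' growth for the lower estimate on $Z_k$, would destroy the linking condition were it not compact; this is precisely why $(H_2)$ is imposed with the exponent $\frac3{4s-3}$, admissible only because $s>\frac34$, so that $u\mapsto\int K\phi_u u^2$ is compact and its coefficient $\gamma_k$ vanishes on high-frequency subspaces. A second source of difficulty is the strongly indefinite geometry (infinite-dimensional $E^-$), which rules out the classical Fountain theorem and forces the separate control of $u_n^+$ and $u_n^-$ in the compactness step. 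Since on $\R^3$ the embeddings $H^s\hookrightarrow L^p$ fail to be compact, all compactness---both for $(C)^*_c$ and for $\gamma_k,\beta_k\to0$---must be drawn from the integrability of the weights $K$ and $g$, which is the technical heart of the proof.
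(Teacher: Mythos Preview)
Your overall strategy matches the paper's: reduce via $\phi_u=G_{2t}\ast(Ku^2)$, use the spectral decomposition from $(H_1)$ to get the even strongly indefinite functional, verify the Gu--Zhou Fountain hypotheses, and check the $(PS)_c$ condition. Your compactness argument for $(PS)_c$ is essentially the paper's Lemma~2.3 Case~2: the identity $I(u_n)-\tfrac12 I'(u_n)[u_n]=\tfrac14\int K\phi_{u_n}u_n^2+(\tfrac12-\tfrac1q)\int g|u_n|^q$ bounds both nonlinear pieces, and testing against $u_n^\pm$ closes the estimate because the resulting exponents stay below~$2$. Your $b_k\to+\infty$ argument via $\gamma_k,\beta_k\to0$ is exactly the paper's Lemma~3.3, Case~2.

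The one substantive divergence is in the verification that $a_k\to-\infty$ on $Y_k$. The paper does \emph{not} use the $g$-term here; instead it invokes its Lemma~2.2(iii), a geometric ``angle'' lemma showing that on $Y\oplus Z_0$ (with $\dim Z_0<\infty$), whenever $\|Qu\|/\|u\|\ge\sin(\arctan\alpha)$ one has $\Psi(u)\ge C_\alpha\|u\|^4$, and then splits into the cases $\|Qu\|/\|Pu\|<\alpha$ (negative quadratic wins) and $\|Qu\|/\|Pu\|\ge\alpha$ (quartic $\Psi$ wins). Your normalized-contradiction argument replaces this by exploiting $q>2$ and $g>0$: if $w_m^+\to z\neq0$ then weak continuity of $\int g|\cdot|^q$ gives $\int g|w_m|^q\to\int g|w|^q>0$ and the $g$-term alone beats $\tfrac12\|u_m\|^2$; if $z=0$ the quadratic part already gives $-\infty$. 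This is correct and, for Theorem~\ref{maintheorem3} specifically, simpler than the paper's route; the paper's angle lemma buys uniformity across all three theorems (where the $g$-term cannot play this role), which you do not need here.

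Two minor points. First, you omit condition $(A_4)$ of the Gu--Zhou theorem (boundedness of $\varphi$ on $\tau$-balls); under $(H_5)$ this is immediate since $\varphi(u)\le\tfrac12\|Qu\|^2\le\tfrac12\|u\|_\tau^2$, but it should be stated. Second, you phrase compactness as a Cerami-type $(C)^*_c$, while the paper's Proposition~3.1 asks for the ordinary $(PS)_c$; your boundedness argument works verbatim under the weaker hypothesis $\varphi'(u_n)\to0$, so this is cosmetic.
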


\begin{remark}
Although
the result in Theorem \ref{maintheorem1}
is similar to \cite[Theorem 1.3]{Gu}, as far as we are concerned, one can never prove
Theorem \ref{maintheorem1} by simply repeating the arguments exploited in
the cited paper caused by the appearance of Bessel operator,
see Lemma \ref{Psi} below for example. On the other hand, with the help of some new analytic skills,
the results are even new for the counterparts in \cite{Gu}.
\end{remark}

\begin{remark}
It is
worthy pointing out that we do not conclude whether the results in Theorems \ref{maintheorem2} and \ref{maintheorem3} remain valid when
 $g$ is sign-changing in $(H_4)$ and $(H_5)$, respectively. Moreover, inspired by \cite{SW2,SW3},
 it is interesting to
consider that problem \eqref{mainequation} has a ground state solution.
We postpone these two questions in a further work.
\end{remark}

Again the results in Theorems \ref{maintheorem1}, \ref{maintheorem2} and \ref{maintheorem3}
are new up to now.
To conclude this section, we sketch our proof.
First of all, because the operator $L=(-\Delta )^s+V$
is strongly indefinite, we then follow the idea introduced in \cite{Pankov,SW2,SW3}
to decompose the space $H^s(\R^3)$ suitably. In the meantime, there exist some standard arguments exhibited in Section \ref{Sec2}
that allows us to treat the problem \eqref{mainequation} by variational methods.
Then, we shall depend heavily on a new type of Fountain theorem approached by Gu and Zhou in \cite{Gu}
to derive the existence of infinitely many nontrivial critical points.
Finally, we concentrate on verifying the necessary properties of the corresponding variational functional, see Sections
\ref{Sec2} and \ref{Sec3} in detail. So, we could derive the proofs successfully.
Alternatively, owing to the Bessel operator appearing in \eqref{mainequation},
there are some unpleasant barriers in the last step and we have to take some careful and deep analysis there.

\subsection{Organization of the paper}
The paper is organized as follows. In Section \ref{Sec2}, we provide some preliminary results. Section \ref{Sec3}
is devoted to the proofs of Theorems \ref{maintheorem1}, \ref{maintheorem2} and \ref{maintheorem3}.
\subsection*{Notations}
From now on in the present article, otherwise mentioned particularly, we shall adopt the following notations:
\begin{itemize}
	\item   $C,C_1,C_2,\cdots$ denote any positive constant, whose value is not relevant.
	 	\item      Let $(X,\|\cdot\|_X)$ be a Banach
space with dual space $(X^{-1},\|\cdot\|_{X^{-1}})$, and $\Phi$ be a functional on $X$.
\item Let $\|\cdot\|_{L^p(\R^3)}$ denote the usual $L^p$-norm for any Lebesgue measurable function $u:\R^3\to\R$, where $p\in[1,\infty]$.
	\item Palais-Smale sequence at level $c\in\R$ ($(PS)_c$ sequence in short)
corresponding to a functional $\Phi$ on $X$ means that $\Phi(x_n)\to c$ and $\Phi^{\prime}(x_n)\to 0$ in $X^{-1}$ as $n\to\infty$, where
$\{x_n\}\subset X$.
	\item  If for each $(PS)_c$ sequence $\{x_n\}$ in $X$,
there exists a subsequence $\{x_{n_{k}}\}$ such that $x_{n_{k}}\to x_0$ in $X$ for some $x_0\in X$, then one
 says that the functional $\Phi$ satisfies the so called $(PS)_c$ condition.
\item $o_{n}(1)$ denotes the real sequence with $o_{n}(1)\to 0$
 as $n \to +\infty$.
\item $``\to"$ and $``\rightharpoonup"$ stand for the strong and
 weak convergence in the related function spaces, respectively.
\end{itemize}

\section{Preliminary Results}\label{Sec2}
In this section, we introduce some preliminary results.
For the potential $V\in C^0(\R^3)\cap L^\infty(\R^3)$,
we can define an operator $L:= (-\Delta)^s +V$. Thanks to the
celebrated results in \cite[Theorem 4.26]{Egorov},
one sees that $L$ is self-disjoint with domain $\mathcal{D}(L)=H^s(\R^3)$.
Assume $|L|$ and $|L|^{1/2}$ are the absolute values of $L$ and the square root of $|L|$, respectively.
We denote $\{\mathcal{E}(\lambda):-\infty<\lambda<+\infty\}$
by the spectral family with respective to $L$. Setting
  $U:=\text{id}-\mathcal{E}(0)-\mathcal{E}(0^-)$,
then by virtue of \cite[Theorem IV 3.3]{EE}, $U$ commutes with $L$, $|L|$
and $|L|^{1/2}$. Moreover, $L=U|L|$ is the polar decomposition of $L$.
In view of \cite{Pankov,SW2,SW3}, there holds
\[
X=\mathcal{D}(|L|^{1/2}),~Y =\mathcal{E}(0^-)X~\text{and}~Z =[\text{id}-\mathcal{E}(0)]X.
\]
Via $(V)$, one has $X=Y\bigoplus Z$.
Given $u\in X$, then $u=u^++u^-$ with
\[
u^-=\mathcal{E}(0^-)u:= Pu~\text{and}~u^+ =[\text{id}-\mathcal{E}(0)]u:=Qu.
\]
Furthermore, for all $u\in X\cap \mathcal{D}(L)$,
 one also has that
\begin{equation}\label{settings1}
Lu^-=-|L|u^-~\text{and}~Lu^+=|L|u^+.
\end{equation}
With the above facts in hands, it permits us to introduce an inner product
which could induce the norm on $X$ as follows
\[
(u,v)_X=(|L|^{1/2}u,|L|^{1/2}v)_{L^2(\R^3)}~\text{and}~ \|u\|_X=\big \| |L|^{1/2}u  \big \|_{L^2(\R^3)},
\]
 where $(\cdot,\cdot)_{L^2(\R^3)}$ stands for the usual inner product of $L^2(\R^3)$. Besides, by \eqref{settings1}, we have that
\begin{equation}\label{decomposition}
\int_{\R^3}[|(-\Delta)^{\frac{s}{2}}u|^2+V(x)|u|^2]dx=\|Qu \|^2_X-\|Pu \|^2_X,~\forall u\in X.
\end{equation}
In particular, it holds that
$$
\left\{
  \begin{array}{ll}
\displaystyle   \int_{\R^3}[|\nabla Qu|^2+V(x)|Qu|^2]dx=\|Qu \|^2_X,\\
 \displaystyle   \int_{\R^3}[|\nabla Pu|^2+V(x)|Pu|^2]dx=-\|Pu\|^2_X.
  \end{array}
\right.
$$
Since $\|\cdot\|_X$ and $\|\cdot\|_{H^s(\R^3)}$
are equivalent by $(H_1)$ (see \cite{Kryszewski} for example),
 there exists a constant $S_p>0$ such that
 \begin{equation}\label{Sobolev1}
  \|u\|_{L^p(\R^3)}\leq S_p\|u\|_X,~ \forall p\in[2,2_s^*],
 \end{equation}
 and by \cite{Ziemer}, there holds
 \begin{equation}\label{Sobolev2}
  \|u\|_{L^2(\R^3)}\leq C_t\|u\|_{L^{t,2}(\R^3)}.
 \end{equation}
Moreover, one knows that $X$ and $L^{t,2}(\R^3)$ can be
 compactly imbedded into $L^p_{\text{loc}}(\R^3)$ with $2\leq p<2_s^*:= \frac{6}{3-2s}$
 and $L^2_{\text{loc}}(\R^3)$, respectively.

\subsection{Formulation of problem \eqref{mainequation}}
In this subsection, we assume that
 $\frac{3}{4}<s<1$ and $0<t<1$.
Considering a fixed $u\in X$, the linear functional $\mathcal{L}_u:L^{t,2}(\R^3)\to \R$ is defined by
$$
\mathcal{L}_u(v):=\int_{\R^3}K(x)u^2vdx.
$$
It is bounded since
 the H\"{o}lder's inequality, \eqref{Sobolev1}-\eqref{Sobolev2} and $(H_2)$ show that
\begin{align*}
  |\mathcal{L}_u(v)|&\leq \left(\int_{\R^3}K(x)|u|^4dx\right)^{\frac12} \left(\int_{\R^3}K(x)|v|^2dx\right)^{\frac12}\\
  & \leq\|K\|^{\frac12}_{L^\infty(\R^3)}C_t\|K\|^{\frac12}_{L^{\frac3{4s-3}}(\R^3)}
S_{2_s^*}^{2}\|u\|^2_{X}\|v\|_{L^{t,2}(\R^3)},~\forall v\in L^{t,2}(\R^3).
\end{align*}
So, due to the
  Lax-Milgram theorem, there is a unique $\phi_u\in L^{t,2}(\R^3)$ such that
\begin{align}\label{Lax}
   \int_{\R^3}K(x)u^2vdx=\mathcal{L}_u(v) =(\phi_u,v)_{L^{t,2}(\R^3)}.
\end{align}
From which, we are derived from the
  Plancherel theorem \cite[Theorem 5.3]{Lieb}
that $\phi_u$ is a weak solution of $(I-\Delta)^t\phi=K(x)u^2$ and so, $\phi_u(x)=G_{2t}\ast (K(x)u^2)$.

Letting $v=\phi_u$ in \eqref{Lax}, one has
\[
 \|\phi_u\|_{L^{t,2}(\R^3)}^2
= \int_{\R^3}K(x)\phi_uu^2dx\leq
\|K\|^{\frac12}_{L^\infty(\R^3)}C_t\|K\|^{\frac12}_{L^{\frac3{4s-3}}(\R^3)}
S_{2_s^*}^{2}\|u\|^2_{X}\|\phi_u\|_{L^{t,2}(\R^3)},
\]
which in turn implies that
\begin{equation}\label{nonlocal}
 \int_{\R^3}K(x)\phi_uu^2dx\leq
 \|K\|_{L^\infty(\R^3)}C_t^2\|K\|_{L^{\frac3{4s-3}}(\R^3)}
S_{2_s^*}^{4}\|u\|^4_{X},~ \forall u\in X.
\end{equation}
Inserting $\phi_u$ into \eqref{mainequation}, there holds
\begin{equation}\label{mainequation1}
 (-\Delta)^s u+V(x) u =K(x)\phi_uu+g(x)|u|^{q-2}u,~ x\in\R^3,
\end{equation}
and its corresponding Euler-Lagrange functional $\varphi :X \to\R$ is defined by
\begin{equation}\label{functional1}
 \varphi (u)=\frac{1}{2} \|Qu\|^2_X-\frac{1}{2} \|Pu\|^2_X
-\frac{1}{4}\int_{\R^3}K(x)\phi_uu^2dx-\frac{1}{q}\int_{\R^3}g(x)|u|^qdx.
\end{equation}
In view of \textbf{one} of $(H_3)-(H_5)$, \eqref{decomposition} and \eqref{nonlocal}, it
therefore would be standard to show that $\varphi$ is well-defined on $X$ and belongs to the class of $C^1(X)$
such that
$$
\varphi^{\prime}(u)[v]=\int_{\mathbb{R}^{3}}\left[(-\Delta)^{\frac{s}{2}}u(-\Delta)^{\frac{s}{2}}v+V(x)uv-K(x)
\phi_uuv-g(x)|u|^{q-2}uv\right]dx,~v\in X.
$$
 Obviously, the critical points of $\varphi$ are the weak solutions of problem \eqref{mainequation1}.

\begin{definition}Let $\frac{3}{4}<s<1$ and $0<t<1$.

(l) We call $(u,\phi)\in H^s(\mathbb{R}^3)\times L^{t,2}(\mathbb{R}^3)$ is a weak solution of problem
\eqref{mainequation} if $u$ is a weak solution
of problem \eqref{mainequation1}.

(2) We call $u\in H^s(\mathbb{R}^3)$ is a weak solution of \eqref{mainequation1} if
$$
\int_{\mathbb{R}^{3}}\left[(-\Delta)^{\frac{s}{2}}u(-\Delta)^{\frac{s}{2}}v+V(x)uv+K(x)\phi_{u}uv-g(x)|u|^{q-2}uv\right]dx=0,
$$
for any $v\in H^s(\mathbb{R}^3)$.
\end{definition}

Let us define the variational functional $\Psi:X\to\R$
  by
  $$\Psi(u)=\int_{\R^3}K(x)\phi_uu^2dx
=\int_{\R^3}\big[G_{2t}\ast (K(x) u^2)\big]K(x) u^2dx.$$
In the following, we collect some important properties for $\Psi$ as follows.

\begin{lemma}\label{Psi} Let $\frac{3}{4}<s<1$ and $0<t<1$
as well as $(H_1)-(H_2)$, then the
 following conclusions hold true:
 \begin{itemize}
  \item[\emph{(i)}] $\Psi(u)\geq 0$ and $\Psi(\theta u)=\theta^4\Psi(u)$ for any
  $u\in X$ and $\theta>0$;
 \item[\emph{(ii)}] If $u_n\rightharpoonup u$ in $X$, $u_n\to u$ in $L_{\text{\emph{loc}}}^p(\R^3)$ with $2<p<2_s^*$
    and $u_n\to u$ a.e. in $\R^3$ as $n\to\infty$, then, going to a subsequence if necessary,
   $$\Psi(u_n)\to \Psi(u) ~\text{\emph{and}}~ \Psi^\prime(u_n)[\psi]\to \Psi^\prime(u)[\psi]$$ for every $\psi\in X$ as
   $n\to\infty$;
  \item[\emph{(iii)}] For any fixed constant $\alpha\in(0,1)$ and every finite dimension
  subspace $Z_0\subset Z$, there exists a constant
   $C_\alpha>0$ such that for each $u\in Y\bigoplus Z_0$ with
   $\|u\|_{X}=1$ there holds
   \[
    \| Qu\|_{X}\geq \sin(\arctan \alpha)
    \Longrightarrow  \Psi(u)\geq C_\alpha.
   \]
\end{itemize}
\end{lemma}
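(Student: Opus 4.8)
The plan is to establish the three properties of $\Psi$ in order, since each builds on standard tools already set up in the excerpt. For part (i), the homogeneity $\Psi(\theta u)=\theta^4\Psi(u)$ follows immediately from the defining formula $\Psi(u)=\int_{\R^3}[G_{2t}\ast(K(x)u^2)]K(x)u^2\,dx$: scaling $u$ by $\theta$ scales each of the two factors $K(x)u^2$ by $\theta^2$, giving $\theta^4$ overall. For nonnegativity, I would use \eqref{Lax} with $v=\phi_u$, which gives $\Psi(u)=\int_{\R^3}K(x)\phi_u u^2\,dx=\|\phi_u\|_{L^{t,2}(\R^3)}^2\geq 0$. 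Thus part (i) is essentially a one-line consequence of the variational characterization of $\phi_u$ already derived.

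For part (ii), the plan is to exploit the compact embedding of $X$ into $L^p_{\mathrm{loc}}(\R^3)$ together with the integrability hypothesis $(H_2)$ on $K$. I would first write $\Psi(u_n)=\|\phi_{u_n}\|_{L^{t,2}(\R^3)}^2$ and show $\phi_{u_n}\rightharpoonup\phi_u$ in $L^{t,2}(\R^3)$, then upgrade to convergence of the norms. The key estimate is controlling $\int_{\R^3}K(x)(u_n^2-u^2)v\,dx$ for test functions $v$; splitting $\R^3$ into a large ball $B_R$ and its complement, on $B_R$ one uses the local strong $L^p$ convergence $u_n\to u$ (with $p$ chosen so that $K\in L^{3/(4s-3)}$ makes the Hölder product integrable), while on $\R^3\setminus B_R$ one uses $\|K\|_{L^{3/(4s-3)}(\R^3\setminus B_R)}\to 0$ as $R\to\infty$ to make the tail uniformly small. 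The convergence $\Psi'(u_n)[\psi]\to\Psi'(u)[\psi]$ is handled analogously, since $\Psi'(u)[\psi]=4\int_{\R^3}K(x)\phi_u u\psi\,dx$ and one argues the weak convergence of $\phi_{u_n}$ together with the strong local convergence of $u_n\psi$.

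The hard part will be part (iii), which is precisely the novelty flagged in the first Remark: it is a quantitative lower bound that must survive the presence of the Bessel operator rather than a Riesz potential. The plan is to argue by contradiction. Suppose no such $C_\alpha$ exists; then there is a sequence $u_n\in Y\oplus Z_0$ with $\|u_n\|_X=1$, $\|Qu_n\|_X\geq\sin(\arctan\alpha)$, yet $\Psi(u_n)\to 0$. Because $Z_0$ is finite-dimensional and $Y\oplus Z_0$ is a fixed closed subspace, the bounded sequence $\{u_n\}$ has a subsequence converging weakly in $X$; crucially, on the finite-dimensional component $Z_0$ weak convergence is strong, so the $Q$-part cannot vanish in the limit and $Qu\neq 0$. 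I would then combine part (ii) with $\Psi(u_n)\to 0$ to deduce $\Psi(u)=0$, which by the characterization $\Psi(u)=\|\phi_u\|^2_{L^{t,2}(\R^3)}$ forces $\phi_u\equiv 0$, hence $K(x)u^2\equiv 0$ a.e.; since $K>0$ a.e. by $(H_2)$, this yields $u\equiv 0$, contradicting $Qu\neq 0$.

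The main obstacle, and where the Bessel operator genuinely intervenes, lies in making the contradiction argument of part (iii) airtight: one must ensure that the limit $u$ of the nonvanishing $Q$-component is nonzero \emph{in a sense strong enough} that $\Psi(u)>0$, and that the lower-semicontinuity/continuity transfer from $\Psi(u_n)$ to $\Psi(u)$ is legitimate under only weak $X$-convergence. The finite-dimensionality of $Z_0$ is essential here — it is what converts the weak limit of $Qu_n$ into a strong limit and prevents the mass from escaping to infinity on the component where it is quantitatively bounded below. Care must also be taken that the local convergence hypotheses required to invoke part (ii) actually hold for the extracted subsequence, which follows from the compact embedding of $X$ into $L^p_{\mathrm{loc}}(\R^3)$ recorded above together with a diagonal/a.e. extraction. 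Once these convergences are secured, the strict positivity of $K$ closes the argument.
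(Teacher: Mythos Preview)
Your proposal is correct and follows essentially the same strategy as the paper: part (i) is immediate, part (iii) is the identical contradiction argument exploiting finite-dimensionality of $Z_0$ to get strong convergence of $Qu_n$ and then $\Psi(u)=0\Rightarrow u=0$ via $K>0$. The only minor difference is in part (ii), where the paper decomposes $\Psi(u_n)-\Psi(u)$ directly and invokes the generalized Vitali convergence theorem (using $K\in L^{3/(4s-3)}$ for uniform integrability) rather than your ball-splitting/tail argument, but both approaches exploit the same integrability of $K$ and are interchangeable.
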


\begin{proof}
(i) In consideration of $K(x)>0$ and $G_{2t}(x)>0$ for all in $x\in\R^3$ by $(H_2)$ and
\eqref{Yukawa}, respectively,
we immediately derive that $\Psi(u)\geq 0$
according to its definition.
Moreover, for all $\theta>0$, it is simple to see that
\begin{align*}
 \Psi(\theta u)  & =\int_{\R^3}\big[G_{2t}\ast (K(x) (\theta u)^2)\big]K(x) (\theta u)^2dx \\
    & =\theta^4\int_{\R^3}\big[G_{2t}\ast (K(x) u^2)\big]K(x) u^2dx=\theta^4\Psi( u),~\forall u\in X,
\end{align*}
showing the Point-(i).

(ii) We can rewrite $\Psi(u_n)-\Psi(u)$ by
\begin{align*}
  \Psi(u_n)-\Psi(u) & =\int_{\R^3}K(x)(\phi_{u_n}-\phi_u) u_n^2dx
+\int_{\R^3}K(x)\phi_u(u_n^2-u^2)dx \\
   &:=I_n^1+I_n^2.
\end{align*}
Recalling $\phi_{u_n}-\phi_{u_n}=G_{2t}\ast (K(x)(u_n^2-u^2))$
and $\|G_{2t}\|_{L^1(\R^3)}=1$, we then apply the Young's inequality with respect to the convolution operator and
$K\in L^{\frac{3}{4s-3}}(\R^3)$ in $(H_2)$ to get
$$\begin{gathered}
  \|\phi_{u_n}-\phi_{u_n}\|_{L^2(\R^3)} \leq \left(\int_{\R^3}K^2(x)|u_n -u |^2|u_n+u|^2dx\right)^{\frac12} \hfill\\
 \ \ \ \    \leq \|K\|^{\frac12}_{L^\infty(\R^3)}
\left(\int_{\R^3}K (x)|u_n -u |^4dx\right)^{\frac14}
\left(\int_{\R^3}K (x) |u_n+u|^4dx\right)^{\frac14}\hfill\\
 \ \ \ \    \leq \|K\|^{\frac34}_{L^\infty(\R^3)}S_4\|u_n\|_X
\left(\int_{\R^3}K (x)|u_n -u |^4dx\right)^{\frac14}=o_n(1),
\hfill\\
\end{gathered}$$
where the last equality follows the generalized Vitali's Convergence theorem.
Thus,
\begin{align*}
 |I_n^1| &\leq\left(\int_{\R^3}K(x)(\phi_{u_n}-\phi_u)^2dx\right)^{\frac12}
\left(\int_{\R^3}K(x)|u_n|^4dx\right)^{\frac12} \\
    & \leq \|K\| _{L^\infty(\R^3)}S_4^2\|u_n\|_X^2 \|\phi_{u_n}-\phi_{u_n}\|_{L^2(\R^3)}
 =o_n(1).
\end{align*}
Similarly, we can also obtain that
\begin{align*}
 |I_n^2| &\leq\left(\int_{\R^3}K(x)\phi^2_udx\right)^{\frac12}
\left(\int_{\R^3}K(x)|u_n+u|^2|u_n-u|^2dx\right)^{\frac12} \\
    & \leq  \|K\|^{\frac34}_{L^\infty(\R^3)} C_t\|\phi_u\|_{L^{t,2}(\R^3)}^2
S_4\|u_n+u\|_X
\left(\int_{\R^3}K(x)|u_n-u|^4dx\right)^{\frac14}\\
&=o_n(1).
  \end{align*}
Combining the above two facts, we derive the proof of the first part. The remaining part is easier, so we omit it here.

(iii)
Let us define a constant $\gamma:= \sin(\arctan \alpha)\in(0,1)$ and a set
$$
\Upsilon^{\alpha}:=\left\{v\in Y\bigoplus Z_0:
\|v\|_{X}=1~ \text{and}~
\|Qv\|_{X}\geq\gamma\right\}.$$
Due to the definition of $u$, one sees that
$\Psi(u)\geq \inf\limits_{v\in \Upsilon^{\alpha}}\Psi(v)
:= C_\alpha$. Using Point-(i), there holds that $C_\alpha\geq0$.
So, to finish the proof,
it suffices to conclude that $C_\alpha>0$.
Suppose, by contradiction, that $C_\alpha=0$.
Then, there exists a sequence $\{v_n\}\subset \Upsilon^{\alpha}$
such that $\Psi(v_n)\to 0$ as $n\to\infty$.
Since $\|v_n\|_X\equiv1$,
up to a subsequence if necessary,
there is $v\in X$ such that $v_n\rightharpoonup v$ in $X$
and $Qv_n\to Qv$
because $\{Qv_n\}\subset Z_0$ with $\dim Z_0<+\infty$. Hence,
$\|Qv\|_{X}\geq\gamma>0$ and
$\Psi(v)\leq \liminf_{n\to\infty}\Psi(v_n)=0$ yielding that $v\equiv0$, a contradiction to (ii).
The proof is completed.
\end{proof}

Next, we prove that the variational functional $\varphi$ satisfies the $(PS)_c$ condition.

\begin{lemma}\label{PSc} Let $\frac{3}{4}<s<1$ and $0<t<1$.
Suppose that $(H_1)-(H_2)$ and \textbf{one} of $(H_3)-(H_5)$ hold, then $\varphi$
satisfies the $(PS)_c$ condition.
\end{lemma}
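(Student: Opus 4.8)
The plan is to take an arbitrary sequence $\{u_n\}\subset X$ with $\varphi(u_n)\to c$ and $\varphi'(u_n)\to 0$ in $X^{-1}$ and to extract a strongly convergent subsequence, proceeding in three stages: boundedness, extraction of a weak limit, and the upgrade from weak to strong convergence through the indefinite splitting $X=Y\oplus Z$. The first task is boundedness. Here I would form the combination $\varphi(u_n)-\frac12\varphi'(u_n)[u_n]=\frac14\Psi(u_n)+\left(\frac12-\frac1q\right)\int_{\R^3}g|u_n|^q\,dx$, in which the strongly indefinite quadratic part cancels. Using Lemma \ref{Psi}(i) (so $\Psi\ge0$), the Sobolev inequality \eqref{Sobolev1} and H\"older's inequality against $g\in L^{q_0}$, this pins down $\Psi(u_n)$ and the weighted term. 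To recover control of $\|u_n^\pm\|_X$ in the indefinite regime I would argue by contradiction: assuming $\|u_n\|_X\to\infty$ and normalizing $w_n:=u_n/\|u_n\|_X$, division of $\varphi(u_n)$ by $\|u_n\|_X^4$ forces $\Psi(w_n)\to0$ (the $g$-term being negligible at this scale because $q<4$), whence Lemma \ref{Psi}(iii) pushes $w_n$ into the negative subspace $Y$, and a further testing argument produces the contradiction. The three ranges of $q$ and the sign hypotheses of $(H_3)$--$(H_5)$ make this step case-dependent, the positivity and homogeneity of $\Psi$ being the load-bearing facts.

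Once $\{u_n\}$ is bounded, I pass to a subsequence with $u_n\rightharpoonup u$ in $X$, $u_n\to u$ in $L^p_{\text{loc}}(\R^3)$ for $2\le p<2_s^*$, and $u_n\to u$ a.e. in $\R^3$. To upgrade to strong convergence I exploit that $P,Q$ are orthogonal projections for $(\cdot,\cdot)_X$ onto $Y$ and $Z$, so that $\varphi'(u)[v]=(u^+-u^-,v)_X-\int_{\R^3}K\phi_u uv\,dx-\int_{\R^3}g|u|^{q-2}uv\,dx$. Testing with $v=u_n^+-u^+\in Z$ and, separately, with $v=u_n^--u^-\in Y$, the cross terms vanish by $Y\perp Z$ and the weak convergence $u_n^\pm\rightharpoonup u^\pm$ kills the residual linear pieces; combining the two relations gives
\[
\|u_n-u\|_X^2=\|u_n^+-u^+\|_X^2+\|u_n^--u^-\|_X^2=o_n(1)+\text{(two nonlinear pairings)},
\]
each nonlinear pairing being of the form $\int_{\R^3}(\cdots)(u_n^\pm-u^\pm)\,dx$ tested against a \emph{weakly null} sequence.

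It therefore remains to show those pairings vanish, which amounts to proving that the operators $u\mapsto g|u|^{q-2}u$ and $u\mapsto K\phi_u u$ are completely continuous from $X$ into $X^{-1}$; then, writing each pairing as coefficient times a weakly null factor, strong convergence of the coefficient in $X^{-1}$ forces the limit to be $0$. For the local term this follows from the generalized Vitali convergence theorem: the exponent $q_0=\frac{6}{6-q(3-2s)}$ is precisely the one making $\frac1{q_0}+\frac{q}{2_s^*}=1$, so H\"older against $g\in L^{q_0}$, the absolute continuity of the integral (uniform integrability) and the decay of $\|g\|_{L^{q_0}(\{|x|>R\})}$ (tightness) yield $g|u_n|^{q-2}u_n\to g|u|^{q-2}u$ in $L^{(2_s^*)'}(\R^3)$.

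The main obstacle, and the one genuinely caused by the Bessel operator, is the nonlocal term. Here I would lean on the estimates underlying Lemma \ref{Psi}(ii): splitting $\phi_{u_n}u_n-\phi_u u=(\phi_{u_n}-\phi_u)u_n+\phi_u(u_n-u)$, the Young inequality for the convolution $\phi_{u_n}-\phi_u=G_{2t}\ast(K(u_n^2-u^2))$ with $\|G_{2t}\|_{L^1(\R^3)}=1$, together with $K\in L^{\frac{3}{4s-3}}(\R^3)\cap L^\infty(\R^3)$ and the generalized Vitali theorem, gives $\int_{\R^3}K|u_n-u|^4\,dx\to0$ and hence $\|\phi_{u_n}-\phi_u\|_{L^2(\R^3)}\to0$. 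Combined with the uniform bound $\int_{\R^3}K\phi_{u_n}^2\,dx\le\|K\|_{L^\infty(\R^3)}\|\phi_{u_n}\|_{L^2(\R^3)}^2\le C$ coming from \eqref{Sobolev2} and \eqref{nonlocal}, a H\"older splitting shows $K\phi_{u_n}u_n\to K\phi_u u$ strongly in $X^{-1}$. Feeding both compactness facts into the displayed identity yields $\|u_n-u\|_X\to0$, which is exactly the asserted $(PS)_c$ condition.
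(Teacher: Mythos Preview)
Your upgrade from weak to strong convergence is essentially the paper's argument and is fine: the splitting into $u_n^\pm-u^\pm$, the Vitali argument for the $g$-term, and the treatment of the nonlocal piece via $\phi_{u_n}-\phi_u=G_{2t}\ast(K(u_n^2-u^2))$ with $\|G_{2t}\|_{L^1}=1$ are exactly what the authors do.

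The boundedness step, however, has a genuine gap. You invoke Lemma~\ref{Psi}(iii) for the normalized sequence $w_n=u_n/\|u_n\|_X\in X=Y\oplus Z$, but that lemma is stated (and proved) only for $u\in Y\oplus Z_0$ with $\dim Z_0<\infty$; its proof uses that $Qv_n\to Qv$ \emph{strongly}, which comes from finite dimensionality. For $w_n$ lying in the full space this compactness is lost: from $\Psi(w_n)\to0$ and weak lower semicontinuity you only get $\Psi(w)=0$, hence $w=0$, but you cannot conclude $\|Qw_n\|_X\to0$ (the norm is merely weakly lower semicontinuous). So the step ``Lemma~\ref{Psi}(iii) pushes $w_n$ into $Y$'' is not justified, and the subsequent ``further testing argument'' is left unspecified.

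The paper avoids this entirely. After writing
\[
c+1+\|u_n\|_X\ \ge\ \tfrac14\Psi(u_n)+\tfrac{q-2}{2q}\int_{\R^3}g|u_n|^q\,dx,
\]
it does \emph{not} normalize. Instead it tests $\varphi'(u_n)$ against $y_n=Pu_n$ and $z_n=Qu_n$ separately, and controls the nonlocal contribution by the H\"older-type bound
\[
\big|\Psi'(u_n)[y_n]\big|\ \le\ C\,\Psi(u_n)^{3/4}\,\|y_n\|_X,
\]
which uses only $(H_2)$ and \eqref{Sobolev1}--\eqref{Sobolev2}. Under $(H_3)$ one has $\Psi(u_n)\le C(1+\|u_n\|_X+\|u_n\|_X^q)$, so $\|u_n\|_X^2\le C(1+\|u_n\|_X+\|u_n\|_X^q)^{3/4}\|u_n\|_X+\text{(lower order)}$, and this closes precisely because $q<\tfrac43$ forces $1+\tfrac34 q<2$. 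Under $(H_4)$ or $(H_5)$ the sign condition $(q-2)g>0$ gives $\Psi(u_n)\le C(1+\|u_n\|_X)$ directly, and the same test yields boundedness from $1+\tfrac34<2$. This direct estimate, rather than your contradiction-plus-(iii) route, is the missing idea.
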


\begin{proof}
Assume that there is a sequence $\{u_n\}\subset X$ satisfies $\varphi(u_n)\to c$ and
$\varphi^\prime(u_n)\to 0$ as $n\to\infty$, then
\begin{align}\label{bounded1}
\nonumber  c+1+\|u_n\|_X & \geq \varphi(u_n)-\frac{1}{2} \varphi^\prime(u_n)[u_n] \\
    & =\frac{1}{4}\Psi (u_n)+\frac{q-2}{2q}\int_{\R^3}g(x)|u_n|^qdx.
\end{align}
To show that $\|u_n\|_X$ is uniformly bounded in $n\in \mathbb{N}$, we split it into two cases.

\textbf{Case 1:} the assumption $(H_3)$ holds.\\
In this case, that is,
$g\in L^{q_0}(\R^3)$ with
   $q_0=\frac{6}{6-q(3-2s)}$ and $1<q<\frac43$,
we can adopt \eqref{bounded1} together with \eqref{Sobolev1} to have that
\begin{align*}
  \Psi (u_n) & \leq 4(c+1+\|u_n\|_X)+\frac{q}{2(2-q)}\left(\int_{\R^3}|g|^{q_0}dx\right)^{\frac1{q_0}}
\left(\int_{\R^3}|u_n|^{2^*}dx\right)^{\frac{q}{2_s^*}} \\
    & \leq C(1+\|u_n\|_X+\|u_n\|^q_X).
\end{align*}
 Denoting $u_n=Pu_n+Qu_n:= y_n+z_n$
with $y_n\in Y$ and $z_n\in Z$, then
\begin{align}\label{bounded2}
\nonumber  \big|\Psi^\prime(u_n)[y_n]\big| &=\left|\int_{\R^3}K(x)\phi_{u_n}u_ny_ndx\right|
\leq \Psi^{\frac{1}{2}}(u_n)
\bigg(\int_{\R^3}K(x)\phi_{u_n}y_n^2dx\bigg)^{\frac{1}{2}}\\
\nonumber &
\leq \Psi^{\frac{1}{2}}(u_n)
\bigg(\int_{\R^3}K(x)\phi_{u_n}^2dx\bigg)^{\frac{1}{4}}
\bigg(\int_{\R^3}K(x)y_n^4dx\bigg)^{\frac{1}{4}} \\
\nonumber &\leq  \|K\|^{\frac{1}{2}}_{L^\infty(\R^3)} C_t^{\frac{1}{2}}S_4\Psi^{\frac{3}{4}}(u_n)\|y_n\|_X\\
 \nonumber   & \leq C(1+\|u_n\|_X+\|u_n\|^q_X)^{\frac{3}{4}}\|y_n\|_X\\
   &
   \leq C(1+\|u_n\|_X+\|u_n\|^q_X)^{\frac{3}{4}}\|u_n\|_X.
\end{align}
Thereby, for $n\in\mathbb{N}$ large, combining $\|u_n\|_X\geq
\|y_n\|_X\geq -\varphi^\prime(u_n)[y_n]$
and \eqref{bounded2}, we obtain
\begin{align*}
 \|y_n\|_X^2 &=-\varphi^\prime(u_n)[y_n]-\Psi^\prime(u_n)[y_n]-\int_{\R^3}g(x)|u_n|^{q-2}u_ny_ndx  \\
   & \leq \|u_n\|_X+C(1+\|u_n\|_X+\|u_n\|^q_X)^{\frac{3}{4}}\|u_n\|_X
+C\|u_n\|_X^{q-1}\|y_n\|_X.
\end{align*}
 Similarly, we deduce that $$\|z_n\|_X^2\leq \|u_n\|_X+C(1+\|u_n\|_X+\|u_n\|^q_X)^{\frac{3}{4}}\|u_n\|_X
 +C\|u_n\|_X^{q-1}\|z_n\|_X.$$
 Recalling the fact that $\|u_n\|_X^2=\|y_n\|_X^2+\|z_n\|_X^2$, we know that
 $\|u_n\|_X$ is uniformly bounded since $2>1+\frac{3}{4}q$ which is $q<\frac43$ by $(H_3)$.

 \textbf{Case 2:} either the assumption $(H_4)$ or $(H_5)$ holds.\\
 Obviously, both $(H_4)$ and $(H_5)$ indicate that $(q-2)g(x)>0$ a.e. in $\R^3$.
 Therefore, it follows from \eqref{bounded1}
   that
$$\Psi(u_n)\leq C(1+\|u_n\|_X)$$ and $$0\leq (q-2)\int_{\R^3}g(x)|u_n|^qdx\leq C(1+\|u_n\|_X).$$
It is similar to \eqref{bounded2} that
 $|\Psi^\prime(u_n)[y_n]|\leq C(1+\|u_n\|_X)^{\frac{3}{4}}\|u_n\|_X$ and then
 \[
 \|y_n\|_X^2\leq \|u_n\|_X+C(1+\|u_n\|_X)^{\frac{3}{4}}\|u_n\|_X
+C (1+\|u_n\|_X)^{\frac{q-1}{q}}\|y_n\|_X.
\]
Analogously, one has
$$\|z_n\|_X^2\leq \|u_n\|_X+C(1+\|u_n\|_X)^{\frac{3}{4}}\|u_n\|_X
+C (1+\|u_n\|_X)^{\frac{q-1}{q}}\|z_n\|_X.$$
So, we still can derive
$\|u_n\|_X$ is uniformly bounded since $2>1+\frac{3}{4}$.

Based on the above discussions,
we can conclude that $\|u_n\|_X$ is uniformly bounded in $n\in \mathbb{N}$.
Moreover, one deduces that $\|y_n\|_X$ and $\|z_n\|_X$ are uniformly bounded in $n\in \mathbb{N}$.
Passing to subsequences if necessary,
there exist two functions $y\in Y$ and $z\in Z$ such that, as $n\to\infty$,
$$
\left\{
  \begin{array}{ll}
    y_n\rightharpoonup y~\text{and}~z_n\rightharpoonup z, &\text{in}~X, \\
    y_n\to y~\text{and}~z_n\rightharpoonup z, & \text{in}~L_{\text{{loc}}}^p(\R^3)~\text{with}~2\leq p<2_s^*, \\
    y_n\to y~\text{and}~z_n\rightharpoonup z, & \text{a.e. in}~\R^3
  \end{array}
\right.
$$
Define $u:=y+z\in X$, then we aim to show that $y_n\to y$ in $X$ along a subsequence.
Actually, we can argue as the calculations in \eqref{bounded2} to obtain that
$$(\Psi^\prime(u_n)-\Psi^\prime(u))[y_n-y]=o_n(1)$$ as $n\to\infty$.
Since $g\in L^{q_0}(\R^3)$, it follows from the generalized Vitali's Convergence theorem again that
$$\begin{gathered}
\left|\int_{\mathbb{R}^N}g(x)(|u_n|^{q-2}u_n-|u|^{q-2}u)(y_n-y)dx\right|\hfill\\
\ \ \ \  \leq \left(\int_{\mathbb{R}^N}|g(x)|\big||u_n|^{q-2}u_n-|u|^{q-2}u\big|^{\frac{q}{q-1}}dx\right)^{\frac{q-1}q}
\left(\int_{\mathbb{R}^N}|g(x)||y_n-y|^qdx\right)^{\frac1q}\hfill\\
\ \ \ \   =o_n(1)
\hfill\\
\end{gathered}$$
as $n\to\infty$. Combining the above two formulas and $\varphi'(u_n)\to0$, there holds
\begin{align*}
  o_n(1) &=\big(\varphi'(u_n)-\varphi'(u)\big) [y_n-y]\\
    &=-\|y_n-y\|_X^2-\big(\Psi'(u_n)-\Psi'(u)\big) [y_n-y]\\
&\ \ \ \  -\int_{\mathbb{R}^N}g(x)(|u_n|^{q-2}u_n-|u|^{q-2}u)(y_n-y)dx\\
    &=-\|y_n-y\|_X^2=o_n(1)
\end{align*}
which indicates the desired result. Analogously, we can also derive that
$\|z_n-z\|_X=o_n(1)$. So, one sees that $u_n\to u$ in $X$ finishing the proof.
\end{proof}


 \section{Proofs of the main results}\label{Sec3}\

In this section, we are going to exhibit the proofs of Theorems \ref{maintheorem1},
\ref{maintheorem2} and \ref{maintheorem3} in detail.
For this goal, we need to introduce the
following improved Fountain theorem developed by Gu and Zhou in \cite{Gu}, that is,

\begin{proposition}\label{Gu}
Let $\Phi\in C^1(X,\R)$ be an even functional satisfying $(PS)_c$ condition and
$\nabla \Phi$ be weakly sequentially continuous, for all
$k\in \mathbb{N}$ with $X=Y_k\bigoplus Z_k$, if there exist two constants $\rho_k>r_k>0$ such that
\begin{itemize}
  \item[$({A}_1)$]  $d_k:= \sup\limits_{u\in Y_k,\|u\|_X\leq \rho_k}\Phi(u)<+\infty$;
  \item[$({A}_2)$]  $a_k:= \sup\limits_{u\in Y_k,\|u\|_X= \rho_k}\Phi(u)<\inf\limits_{u\in Z_k,\|u\|_X\leq r_k}\Phi(u)$;
  \item[$({A}_3)$]  $b_k:= \inf\limits_{u\in Z_k,\|u\|_X= r_k}\Phi(u)\to+\infty$ as $k\to+\infty$;
  \item[$({A}_4)$]  For any $\sigma>0$, there is a constant $C_\sigma>0$ such that $\sup\limits_{\|u\|_\tau<\sigma}\Phi(u)\leq C_\sigma<+\infty$,
where
\begin{equation}\label{tau}
\|u\|_{\tau}:=\max\bigg\{\sum_{j=0}^{\infty}\frac{1}{2^{j+1}}|\langle Pu,e_{j}\rangle|,\|Qu\|_X\bigg\}\quad\mathrm{for}~u\in X,
\end{equation}
with $\{e_j\}_{j=1}^\infty$ denoting the normal orthogonal basis of $Y$.
\end{itemize}
Then, the functional $\Phi$ has a sequence of critical points $\{u^{k_m}\}$ such that
$$\lim\limits_{m\to\infty}\|u^{k_m}\|_X=+\infty.$$
\end{proposition}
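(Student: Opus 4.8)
The plan is to run an abstract minimax scheme adapted to the strongly indefinite geometry, in which the linking is detected by a topological degree built on the $\tau$-topology of $(A_4)$ rather than on the norm topology (the norm-topology deformation lemma being unavailable for strongly indefinite $\Phi$). First I would fix $k$ and record the geometric picture encoded by the hypotheses. Writing $B_k := \{u \in Y_k : \|u\|_X \leq \rho_k\}$ and $S_k := \{u \in Z_k : \|u\|_X = r_k\}$, condition $(A_2)$ says that $\Phi$ separates the relative boundary $\partial B_k = \{u \in Y_k : \|u\|_X = \rho_k\}$ from $S_k$, namely $a_k = \sup_{\partial B_k}\Phi < \inf_{\{u\in Z_k : \|u\|_X \le r_k\}}\Phi \le \inf_{S_k}\Phi = b_k$, while $(A_1)$ gives $d_k = \sup_{B_k}\Phi < +\infty$. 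This is exactly the configuration in which $\partial B_k$ and $S_k$ link.

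The minimax value would be
$$
c_k := \inf_{h \in \Gamma_k}\ \sup_{u \in B_k} \Phi(h(u)),
$$
where $\Gamma_k$ is the class of maps $h : B_k \to X$ that are $\tau$-continuous, carry $\tau$-bounded sets to $\tau$-bounded sets, satisfy $h|_{\partial B_k} = \mathrm{id}$, and are of the admissible form $h = \mathrm{id} - \text{(a }\tau\text{-continuous, locally compact correction)}$ so that the Kryszewski--Szulkin degree is defined for the map that $h$ induces on the $Z_k$-component. The heart of the argument is the \emph{intersection (linking) property}: every $h \in \Gamma_k$ satisfies $h(B_k) \cap S_k \neq \emptyset$. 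I would prove this by a degree computation, reducing to the situation relative to the finite-codimensional subspace $Z_k$, using that $h$ is the identity on $\partial B_k$ and invoking Borsuk's theorem — here the evenness of $\Phi$ is what lets me take the deformations odd, so that the relevant degree is odd, hence nonzero. The intersection yields $\sup_{B_k}\Phi\circ h \ge \inf_{S_k}\Phi = b_k$ for every $h$, whence $b_k \le c_k$; taking $h=\mathrm{id}$ gives $c_k \le d_k < +\infty$. Thus each $c_k$ is finite and, by $(A_3)$, $c_k \to +\infty$ as $k \to \infty$.

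Next I would show each $c_k$ is a critical value by a quantitative deformation argument carried out in the $\tau$-topology. If $c_k$ were a regular value, the $(PS)_{c_k}$ condition together with the weak sequential continuity of $\nabla\Phi$ would let me construct a $\tau$-continuous pseudo-gradient flow that strictly lowers $\Phi$ near the level $c_k$ while leaving $\partial B_k$ fixed; condition $(A_4)$ enters precisely here to guarantee that $\Phi$ is bounded on $\tau$-bounded sets, so that the flow remains within the admissible class $\Gamma_k$ and the deformation cannot escape to infinity. Composing this deformation with a near-optimal $h$ would push $\sup_{B_k}\Phi\circ h$ strictly below $c_k$, contradicting the definition of $c_k$. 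Hence there exists $u^k \in X$ with $\Phi'(u^k)=0$ and $\Phi(u^k)=c_k$.

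Finally, relabelling the resulting critical points as $\{u^{k_m}\}$, I would conclude $\|u^{k_m}\|_X \to +\infty$: since $\Phi(u^{k_m}) = c_{k_m} \to +\infty$ while $\Phi \in C^1(X,\R)$ carries norm-bounded sets to bounded sets (its growth being controlled through $(A_1)$ and $(A_4)$), these critical points cannot remain in any ball. I expect the genuinely hard step to be the linking/degree part in the infinite-dimensional setting — building a degree insensitive to the infinite dimension of $Y_k$ and verifying $\tau$-continuity of the pseudo-gradient flow — since this is exactly where the $\tau$-seminorm of $(A_4)$ and the weak sequential continuity of $\nabla\Phi$ must be combined, following the Kryszewski--Szulkin machinery.
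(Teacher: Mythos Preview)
The paper does not prove this proposition at all: it is quoted verbatim as the ``improved Fountain theorem developed by Gu and Zhou in \cite{Gu}'' and used as a black box. There is therefore no proof in the paper to compare your proposal against.

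Your sketch is a reasonable outline of the Kryszewski--Szulkin style argument that underlies such results (the $\tau$-topology, admissible maps, linking via degree, $\tau$-continuous pseudo-gradient deformation), and this is indeed the machinery on which the Gu--Zhou theorem is built. But since the present paper simply imports the proposition from \cite{Gu}, any detailed verification would have to be checked against that reference rather than anything here.
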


Let us recall the decomposition on $X=Y\bigoplus Z$ in Section \ref{Sec2},
we could suppose that the two sequences $\{e_j\}_{j=1}^\infty$ and $\{f_j\}_{j=1}^\infty$
are normal orthogonal basis of $Y$ and $Z$, respectively. For any
$k\in \mathbb{N}$, define
\[
Y_k:= Y\bigoplus\left(\bigoplus_{j=1}^k\R f_j\right)~ \text{and}~
Z_k:= \overline{\bigoplus_{j=k+1}^\infty\R f_j}.
\]

In order to apply the Proposition \ref{Gu} successfully, we shall regard the variational
functional $\Phi$ as $\varphi$ which corresponds to Eq. \eqref{functional1}.
From now on until the end of this section, we shall always suppose that $\frac34<s<1$ and $0<t<1$
and do not mention them any longer.

First of all, we show that the
functional $\varphi$ satisfies the condition $(A_1)$.

\begin{lemma}\label{linking1}
Assume $(H_1)-(H_2)$ and \textbf{\emph{one}} of $(H_3)-(H_5)$ hold,
then there exists $\rho_k>0$ such that
$$
d_k:= \sup\limits_{u\in Y_k,\|u\|_X\leq \rho_k}\Phi(u)<+\infty.
$$
 \end{lemma}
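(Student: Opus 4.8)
The plan is to bound $\varphi$ from above directly on the bounded set $\{u\in Y_k:\|u\|_X\leq\rho_k\}$, exploiting the fact that the only positive quadratic contribution comes from $Qu$, which for $u\in Y_k$ lives in the finite-dimensional space $\bigoplus_{j=1}^k\R f_j$. Writing $u=Pu+Qu$ and recalling the expression \eqref{functional1}, I would first observe that $-\tfrac12\|Pu\|_X^2\leq0$ and, by Lemma \ref{Psi}(i), $-\tfrac14\Psi(u)\leq0$, so these two terms may simply be discarded in an upper estimate. The surviving positive term satisfies $\tfrac12\|Qu\|_X^2\leq\tfrac12\|u\|_X^2\leq\tfrac12\rho_k^2$, which is finite for \emph{any} fixed $\rho_k>0$. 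Everything then reduces to controlling the nonlinear term $-\tfrac1q\int_{\R^3}g|u|^q\,dx$.

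For this last term I would apply Hölder's inequality with the exponent pair $(q_0,q_0')$ fixed by $(H_3)$--$(H_5)$, together with the Sobolev estimate \eqref{Sobolev1}. The essential arithmetic is that the choice $q_0=\frac{6}{6-q(3-2s)}$ yields $q_0'=\frac{6}{q(3-2s)}$, so that $qq_0'=\frac{6}{3-2s}=2_s^*$; consequently
$$\left|\int_{\R^3}g|u|^q\,dx\right|\leq\|g\|_{L^{q_0}(\R^3)}\|u\|_{L^{2_s^*}(\R^3)}^q\leq\|g\|_{L^{q_0}(\R^3)}S_{2_s^*}^q\|u\|_X^q\leq\|g\|_{L^{q_0}(\R^3)}S_{2_s^*}^q\rho_k^q.$$
Under $(H_5)$ one has $g>0$ a.e.\ and $q>2$, so $-\tfrac1q\int_{\R^3}g|u|^q\,dx\leq0$ can be dropped outright, giving $\varphi(u)\leq\tfrac12\rho_k^2$. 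Under $(H_3)$ (no sign restriction on $g$) and under $(H_4)$ (where $g<0$ renders the term nonnegative) I would instead retain the estimate above and obtain $\varphi(u)\leq\tfrac12\rho_k^2+\tfrac1q\|g\|_{L^{q_0}(\R^3)}S_{2_s^*}^q\rho_k^q$.

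Combining the cases produces a bound on $\varphi(u)$ uniform over the prescribed set, so that $d_k\leq\tfrac12\rho_k^2+\tfrac1q\|g\|_{L^{q_0}(\R^3)}S_{2_s^*}^q\rho_k^q<+\infty$ for every $\rho_k>0$, which establishes $(A_1)$. I do not anticipate a genuine obstacle here: the conclusion holds for an arbitrary choice of $\rho_k$, and the only point deserving care is checking that the Hölder conjugate of $q_0$ lands exactly on the critical exponent $2_s^*$, so that \eqref{Sobolev1} is applicable. The conceptual reason the supremum remains finite even though $Y_k$ is infinite-dimensional is that the indefinite quadratic form is \emph{negative} on the infinite-dimensional part $Y$ and positive only on the finite-dimensional complement inside $Z$; the precise value of $\rho_k$ itself is left free at this stage and will be pinned down later through the inequality $\rho_k>r_k$ imposed by $(A_2)$.
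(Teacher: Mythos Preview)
Your argument is correct and is precisely the explicit version of what the paper dispatches in one line: the authors simply observe that ``$\varphi$ maps a bounded set into a bounded set'' and omit the details. Your term-by-term estimates are exactly the content of that remark; note, however, that neither the finite-dimensionality of $\bigoplus_{j=1}^k\R f_j$ nor the sign of the quadratic form on $Y$ is actually needed here---the bound $\varphi(u)\leq\tfrac12\|u\|_X^2+\tfrac1q\|g\|_{L^{q_0}}S_{2_s^*}^q\|u\|_X^q$ already suffices on any bounded set, so the ``conceptual reason'' you describe is superfluous for this particular lemma.
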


 \begin{proof}
The proof is standard since $\varphi$ maps a bounded set into a bounded set,
so we omit it here.
\end{proof}

Then, we verify that the
functional $\varphi$ satisfies the conditions $(A_2)$ and $(A_3)$.

\begin{lemma}\label{linking2}
Assume $(H_1)-(H_2)$ and \textbf{\emph{one}} of $(H_3)-(H_5)$ hold,
then there exist $\rho_k>r_k>0$ such that
$a_k<\inf\limits_{u\in Z_k,\|u\|_X\leq r_k}\varphi(u)$
and $b_k\to+\infty$ as $k\to+\infty$.
 \end{lemma}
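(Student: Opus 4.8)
The plan is to verify the two linking inequalities $(A_2)$ and $(A_3)$ by exploiting the spectral splitting $X=Y\bigoplus Z$ together with the three properties of $\Psi$ collected in Lemma \ref{Psi}. The organizing observation is that the negative quartic term $-\frac14\Psi(u)$ plays opposite roles on the two subspaces: on the high-frequency tail $Z_k$ it must be shown to be \emph{negligible}, whereas on $Y_k$ it is precisely what drives $\varphi\to-\infty$. Accordingly, I would first introduce the two sequences of best constants
\[
\tilde\beta_k:=\sup_{u\in Z_k,\,\|u\|_X=1}\Psi(u)^{1/4},\qquad
\beta_k:=\sup_{u\in Z_k,\,\|u\|_X=1}\Big(\int_{\R^3}|g||u|^q\,dx\Big)^{1/q},
\]
and prove $\tilde\beta_k\to0$ and $\beta_k\to0$ as $k\to\infty$. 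Both are nonincreasing, hence convergent; choosing near-maximizers $u_k\in Z_k$ with $\|u_k\|_X=1$, the orthogonality $(u_k,f_j)_X=0$ for $k>j$ forces $u_k\rightharpoonup0$ in $X$, so that Lemma \ref{Psi}(ii) gives $\Psi(u_k)\to\Psi(0)=0$, while the weak continuity of $u\mapsto\int|g||u|^q$ for $g\in L^{q_0}$ (via the generalized Vitali theorem used above, splitting $\R^3$ into a large ball and its complement) gives $\int|g||u_k|^q\to0$. Thus both limits vanish, and by the homogeneity in Lemma \ref{Psi}(i) one has $\Psi(u)\le\tilde\beta_k^4\|u\|_X^4$ and $\int|g||u|^q\le\beta_k^q\|u\|_X^q$ on $Z_k$.

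For condition $(A_3)$, note that $Pu=0$ on $Z_k$, so by \eqref{functional1} and the above bounds, for $\|u\|_X=r_k$,
\[
\varphi(u)=\tfrac12\|u\|_X^2-\tfrac14\Psi(u)-\tfrac1q\int_{\R^3}g|u|^q\,dx
\ \ge\ \tfrac12 r_k^2-\tfrac14\tilde\beta_k^4 r_k^4-\tfrac1q\beta_k^q r_k^q,
\]
the last term being simply dropped when $g<0$ under $(H_4)$. I would then pick $r_k\to+\infty$ so that both negative contributions are $o(r_k^2)$: under $(H_3)$–$(H_4)$, where $q<2$, the choice $r_k=\tilde\beta_k^{-1}$ works since $\beta_k^q r_k^{q-2}\to0$ automatically; under $(H_5)$, where $q>2$, I would take $r_k=\min\{\tilde\beta_k^{-1},\beta_k^{-q/(2(q-2))}\}$, which still tends to $+\infty$ precisely because $\beta_k\to0$. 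In every case $\varphi(u)\ge\tfrac14 r_k^2\to+\infty$ on the sphere, so $b_k\to+\infty$.

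For condition $(A_2)$ the decisive step — and the main obstacle — is to show that $\varphi(u)\to-\infty$ as $\|u\|_X\to\infty$ on the \emph{infinite-dimensional} space $Y_k=Y\bigoplus\big(\bigoplus_{j=1}^k\R f_j\big)$, where $\tfrac12\|Qu\|_X^2$ is a positive term; this is exactly the purpose of Lemma \ref{Psi}(iii). I would fix $\alpha$ small enough that $\gamma:=\sin(\arctan\alpha)<1/\sqrt2$ and apply part (iii) with the finite-dimensional $Z_0=\bigoplus_{j=1}^k\R f_j$. By homogeneity, every $u\in Y_k$ falls into one of two cases. If $\|Qu\|_X\ge\gamma\|u\|_X$, then $\Psi(u)\ge C_\alpha\|u\|_X^4$, whence
\[
\varphi(u)\le\tfrac12\|u\|_X^2-\tfrac{C_\alpha}{4}\|u\|_X^4-\tfrac1q\int_{\R^3}g|u|^q\,dx\ \to\ -\infty,
\]
since the $g$-term is nonpositive under $(H_5)$ and subquadratic ($q<2$) under $(H_3)$–$(H_4)$. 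If instead $\|Qu\|_X<\gamma\|u\|_X$, then $\|Pu\|_X^2>(1-\gamma^2)\|u\|_X^2$ and $\varphi(u)\le\tfrac12(2\gamma^2-1)\|u\|_X^2-\tfrac1q\int g|u|^q$, where the negative quadratic leading coefficient $2\gamma^2-1<0$ again forces $\varphi(u)\to-\infty$. Consequently $a_k=\sup_{u\in Y_k,\|u\|_X=\rho_k}\varphi(u)\to-\infty$ as $\rho_k\to+\infty$; since $\inf_{u\in Z_k,\|u\|_X\le r_k}\varphi(u)$ is finite (bounded below by the $(A_3)$ estimate, and in fact equal to $0$ once the negative terms are $o(\|u\|_X^2)$ on the whole ball under $(H_4)$–$(H_5)$), choosing $\rho_k>r_k$ large enough secures $a_k<\inf_{Z_k,\|u\|_X\le r_k}\varphi$, which completes $(A_2)$. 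The crux of the whole argument is thus the interplay between Lemma \ref{Psi}(ii) (taming the quartic on $Z_k$) and Lemma \ref{Psi}(iii) (activating it on $Y_k$), both of which are the Bessel-operator-specific ingredients flagged in the remark.
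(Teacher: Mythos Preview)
Your proposal is correct and follows essentially the same route as the paper. Both arguments hinge on Lemma~\ref{Psi}(ii) to show the tail constants $\tilde\beta_k,\beta_k\to0$ on $Z_k$ for $(A_3)$, and on the angle dichotomy combined with Lemma~\ref{Psi}(iii) on $Y_k$ for $(A_2)$; the only cosmetic differences are that the paper phrases the split as $\|Qu\|_X/\|Pu\|_X\lessgtr\alpha$ (equivalent to your $\|Qu\|_X\lessgtr\gamma\|u\|_X$ since $\gamma=\alpha/\sqrt{1+\alpha^2}$, and note that any $\alpha\in(0,1)$ already gives $\gamma<1/\sqrt2$), and under $(H_3)$--$(H_4)$ the paper uses the cruder fixed bound $\int|g||u|^q\le C\|u\|_X^q$ rather than your sharper $\beta_k^q\|u\|_X^q$, choosing $r_k=(\beta_k^1)^{-1/2}$ instead of your $\tilde\beta_k^{-1}$.
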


 \begin{proof}
 Firstly, to derive the first part,
we just need to show that
$$
a_k:= \sup_{u\in Y_k,\|u\|_X= \rho_k}\varphi(u)\to-\infty
$$
 as $\rho_k\to+\infty$. Either $(H_3)$ or $(H_4)$ holds true, for all $u\in X$, there holds
$$
-\frac1q\int_{\R^3}g(x)|u|^qdx\leq C\|u\|_X^q,~1<q<2,
$$
for some positive constant $C$ which is independent of $u$. If $(H_5)$ holds true, because $g(x)>0$
for all $x\in\R^3$, given $u\in X$, it has that
$$
-\frac1q\int_{\R^3}g(x)|u|^qdx\leq 0.
$$
In summary, for any fixed $u=Qu+Pu\in X$, we can always deduce that
 \begin{equation}\label{HHH}
 \varphi(u)\leq \frac{1}{2}\|Qu\|^2_X-\frac{1}{2}\|Pu\|^2_X
 -\frac{1}{4}\Psi(u)+C\|u\|_X^q,~ \text{where}~ 1<q<2.
 \end{equation}
Now, we begin verifying that $a_k\to-\infty$ as $\rho_k\to+\infty$.
Given a $u\in Y_k=Y\bigoplus \left(\bigoplus\limits_{j=1}^k\R f_j\right)$
with $\|u\|_X=\rho_k$.

If
 $u\in Y$, then $Qu=0$ and $Pu=u$, it follows from \eqref{HHH} that
$$
\varphi(u)\leq -\frac{1}{2}\rho_k^2+C\rho_k^q\to-\infty
$$
as
 $\rho_k\to+\infty$ since $q<2$ in \eqref{HHH}.

 If $u=y+z$ with $y\in Y$ and $z\in \bigoplus\limits_{j=1}^k\R f_j$,
for the constant $\alpha\in(0,1)$ given by Lemma \ref{Psi}-(iii),
we shall distinguish the proof two cases:
\[
\text{(i)}~  \|Q u\|_{X}/
\|P u\|_{X}<\alpha
~ \text{and}~
\text{(ii)}   ~\|Q u\|_{X}/
\|P u\|_{X}\geq\alpha.
\]
If (i) occurs, then $\|u\|_{X}^2=\|Qu\|_{X}^2+\|Pu\|_{X}^2<(1+\alpha^2)
\|P u\|_{X}^2$ which in turn shows that
$$
\|Qu\|_{X}^2=\|u\|_{X}^2-\|Pu\|_{X}^2<\frac{\alpha^2}{1+\alpha^2}\|u\|_{X}^2.
$$
Combining $\alpha\in(0,1)$ in Lemma \ref{Psi}-(iii) and \eqref{HHH} with $q<2$, we obtain
\begin{align*}
  \varphi (u) &\leq \frac{\alpha^2}{2(1+\alpha^2)} \|u\|_{X}^2-\frac{1}{2(1+\alpha^2)}
\|u\|_{X}^2+C\|u\|_{X}^q \\
   &= -\frac{1-\alpha^2}{2(1+\alpha^2)}\rho_k^2+C\rho_k^q\\
   &
   \to-\infty~ \text{as}~ \|u\|_{X}=\rho_k\to+\infty.
\end{align*}
If (ii) occurs, we set $v:=
\frac{u}{\|u\|_{X}}$ and so $\|v\|_X=1$.
Moreover, one has that
$$\|Qv\|_X=\frac{\|Qu\|_X}{\|u\|_X}
=\sin\left(
 \arctan \frac{\|Qu\|_{X}}{\|Pu\|_{X}}
 \right).
$$
In this case, one deduces $\|Qv\|_X\geq \sin(\arctan \alpha)$.
Accordingly, $v\in Y\bigoplus \left(\bigoplus\limits_{j=1}^k\R f_j\right)$ with
$\dim \left(\bigoplus\limits_{j=1}^k\R f_j\right)<+\infty$, as a consequence of Lemma
  \ref{Psi}-(i) and (iii), we obtain $\Psi(u)\|u\|_X^{-4}=\Psi(v)\geq C_\alpha>0$.
  Then, adopting \eqref{HHH} again, we arrive at
\begin{align*}
 \varphi (u) & \leq \frac{1}{2}\|u\|_{X}^2-\frac{C_\alpha}{4} \|u\|_X^{4}
 +C\|u\|_{X}^q \\
    & =\frac{1}{2}\rho_k^2-\frac{C_\alpha}{4} \rho_k^{4}
 +C\rho_k^q\\
 &
\to-\infty~ \text{as}~ \|u\|_{X}=\rho_k\to+\infty.
\end{align*}
Therefore, we always have $a_k\to-\infty$ as $\rho_k\to+\infty$ and the first part concludes.

Secondly, we
 claim that $$\beta_k^1:= \sup_{u\in Z_k:\|u\|_X=1}\Psi(u)\to0$$
  as $k\to+\infty$.
Indeed, according to the definition of $Z_k$, one simply has $0<\beta_{k+1}^1\leq \beta_k^1$
 for any $k\in \mathbb{N}$. Then, there exists a constant $\beta\geq0$
 such that $\beta_k^1\to\beta$ as $k\to+\infty$. By the
 definition of $\beta^1_k$, there is a $u_k\in Z_k$ with
 $\|u_k\|_X=1$ such that $\Psi(u_k)\geq \beta_k^1/2$.
 Since $\{u_k\}\subset Z_k=\overline{\bigoplus\limits_{j=k+1}^\infty\R f_j}$,
 one concludes that $u_k\rightharpoonup 0$ in $X$ as $k\to+\infty$.
 In view of Lemma \ref{Psi}-(ii), passing to a subsequence if necessary, we derive
 $\Psi(u_k)\to0$ as $k\to+\infty$. Hence,
we must have $\beta=0$ and the claim holds true.

 Finally, to prove $b_k\to+\infty$ as $k\to+\infty$, we split it into two cases.

\textbf{Case 1:} either the assumption $(H_3)$ or $(H_4)$ holds true.\\
In this case, combining $q<2$ and $\beta_k^1\to0$ as $k\to+\infty$,
there holds
$$
 \frac{1}{4}(\beta_k^1)^{\frac{q-2}{2}}-\frac{C}{q}\geq \frac{1}{8}~\text{for some sufficiently large}~k\in \mathbb{N},
$$
where $C>0$ is a constant independent of $k$.
As a consequence, for any $u\in Z_k$ with $\|u\|_X=r_k=(\beta_k^1)^{-\frac{1}{2}}$,
by means of $g\in L^{q_0}(\R^3)$,
we have
\begin{align*}
  \varphi(u) &\geq \frac{1}{2}\|u\|^2_X-\frac{\beta_k^1}{4}\|u\|^4_X-\frac{C}{q}\|u\|^q_X\\
  &
  =\bigg(\frac{1}{4}\|u\|^{2-q}_X-\frac{C}{q}\bigg)\|u\|^q_X
  + \frac{1}{4}(1-\beta_k^1\|u\|^2_X)\|u\|^2_X\\
  &= \bigg(\frac{1}{4}\|u\|^{2-q}_X-\frac{C}{q}\bigg)\|u\|^q_X
  \geq \frac{1}{8}\|u\|^q_X=\frac{1}{8}(\beta_k^1)^{-\frac{q}{2}}:=\frac{1}{8}r_k^q,
\end{align*}
for some sufficiently large $k\in \mathbb{N}$. Clearly,
$r_k=(\beta_k^1)^{-\frac{1}{2}}\to+\infty$ as $k\to+\infty$. Hence, it holds that
$b_k\to+\infty$ as $k\to+\infty$.

\textbf{Case 2:} the assumption $(H_5)$ holds true.\\
Since $g(x)>0$ a.e. for $x\in\R^3$ and $g\in L^{q_0}(\R^3)$, for any $k\in \mathbb{N}$, we can verify that
$$\beta_k^2:=\sup_{u\in Z_k:\|u\|_X=1}\int_{\R^3}g(x)|u|^qdx\to0.$$
 Indeed, one can easily get that $0<\beta_{k+1}^2\leq \beta_k^2$
 for any $k\in \mathbb{N}$ and so there is a constant $\hat{\beta}\geq0$
 such that $\beta_k^2\to\hat{\beta}$ as $k\to+\infty$.
By the
 definition of $\beta^2_k$, there is a $u_k\in Z_k$ with
 $\|u_k\|_X=1$ such that $\int_{\R^3}g(x)|u_k|^qdx\geq \beta_k^2/2$.
Recalling $\{u_k\}\subset Z_k=\overline{\bigoplus\limits_{j=k+1}^\infty\R f_j}$,
 one concludes that $u_k\rightharpoonup 0$ in $X$ as $k\to+\infty$.
Due to $g\in L^{q_0}(\R^3)$, one immediately has that
$\int_{\R^3}g(x)|u_k|^qdx\to0$ and so $\hat{\beta}=0$.
Therefore, for all
$u\in Z_k$ with $\|u\|_X=r_k=\min\bigg\{
  \sqrt{\frac{1}{2\beta_k^1}} ,
 \bigg(\frac{q}{8\beta_k^2}\bigg)^{\frac{1}{q-2}}\bigg\}$, we have
\begin{align*}
 \varphi(u) &\geq
 \frac{1}{4}\|u\|^2_X+\bigg(\frac{1}{8}-\frac{\beta_k^1}{4}\|u\|^2_X\bigg)\|u\|^2_X
 +\bigg(\frac{1}{8}-\frac{\beta_k^2}{q}\|u\|^{q-2}_X\bigg)\|u\|^2_X\\
   &\geq
 \frac{1}{4}\|u\|^2_X =\frac{1}{4}\min\bigg\{
  \frac{1}{2\beta_k^1} ,
 \bigg(\frac{q}{8\beta_k^2}\bigg)^{\frac{2}{q-2}}\bigg\}\\
 &
 := \frac{1}{4}r_k^2.
\end{align*}
It is easy to see that $r_k \to+\infty$ as $k\to+\infty$ since $q>2$
in this case.
So,
$b_k\to+\infty$ as $k\to+\infty$. The proof of this lemma is completed.
 \end{proof}

Finally, the condition $(A_4)$ will be proved for the
functional $\varphi$ as follows.

\begin{lemma}\label{linking3}
Assume $(H_1)-(H_2)$ and \textbf{\emph{one}} of $(H_3)-(H_5)$ hold,
then for any $\sigma>0$, there is a constant $C_\sigma>0$ such that $\sup\limits_{\|u\|_\tau<\sigma}\varphi(u)\leq C_\sigma<+\infty$.
 \end{lemma}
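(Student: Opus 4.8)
The plan is to exploit the sign structure of $\varphi$ together with the fact that the $\tau$-norm controls only the component $\|Qu\|_X$ while leaving $\|Pu\|_X$ free, so that the negative-definite quadratic term $-\tfrac12\|Pu\|_X^2$ absorbs everything in the $Y$-direction. First I would record the elementary but crucial observation that, by the very definition \eqref{tau} of $\|\cdot\|_\tau$, the inequality $\|u\|_\tau<\sigma$ forces
$$\|Qu\|_X\le\|u\|_\tau<\sigma,$$
whereas it imposes no $X$-bound on $Pu$ whatsoever, so the $\tau$-ball is genuinely unbounded in $X$ along $Y$. Next I would discard the two manifestly nonpositive terms of \eqref{functional1}: by Lemma \ref{Psi}-(i) one has $\Psi(u)\ge0$, hence $-\tfrac14\Psi(u)\le0$, and trivially $-\tfrac12\|Pu\|_X^2\le0$.

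Then I would separate the cases according to the sign of $g$. When $(H_5)$ holds, $g(x)>0$ a.e. gives $-\tfrac1q\int_{\R^3}g|u|^q\,dx\le0$, so that every term of $\varphi(u)$ other than $\tfrac12\|Qu\|_X^2$ is nonpositive and
$$\varphi(u)\le\tfrac12\|Qu\|_X^2<\tfrac12\sigma^2=:C_\sigma,$$
which settles this case at once. When instead $(H_3)$ or $(H_4)$ holds, I would reuse the H\"older--Sobolev estimate already established in the proof of Lemma \ref{linking2}, namely $-\tfrac1q\int_{\R^3}g|u|^q\,dx\le C\|u\|_X^q$ with $1<q<2$. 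Writing $t:=\|Pu\|_X$ and using $\|u\|_X^2=\|Qu\|_X^2+\|Pu\|_X^2<\sigma^2+t^2$, I obtain
$$\varphi(u)\le\tfrac12\sigma^2-\tfrac12 t^2+C\big(\sigma^2+t^2\big)^{q/2},\qquad t\in[0,\infty).$$

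The decisive step is then purely one-dimensional: since $q<2$, the perturbation $(\sigma^2+t^2)^{q/2}$ grows strictly slower than $t^2$, so the right-hand side is a continuous function of $t$ that tends to $-\infty$ as $t\to+\infty$; consequently it attains a finite maximum $C_\sigma$ over $[0,\infty)$, a number depending on $\sigma$ but not on $u$. This bound holds uniformly over the entire (unbounded) set $\{\|u\|_\tau<\sigma\}$, which is exactly $(A_4)$. The only genuine obstacle is precisely this unboundedness: one cannot merely say that $\varphi$ is bounded on bounded sets, because $\tau$-balls are unbounded in $X$ along $Y$. The argument goes through only because the coercive sign of $-\tfrac12\|Pu\|_X^2$ strictly dominates the subquadratic term $C\|u\|_X^q$ (together with $\Psi\ge0$), so that no control on $\|Pu\|_X$ coming from the $\tau$-norm is actually needed.
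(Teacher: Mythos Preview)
Your proof is correct and follows essentially the same route as the paper: drop the nonpositive term $-\tfrac14\Psi(u)$, bound $-\tfrac1q\int g|u|^q$ by $C\|u\|_X^q$ with a subquadratic exponent (trivially zero under $(H_5)$), use $\|Qu\|_X\le\|u\|_\tau<\sigma$, and then invoke the one-dimensional fact that $-\tfrac12 t^2+$(subquadratic in $t$) is bounded above to handle the unbounded $Y$-direction. The only cosmetic difference is that the paper splits $\|u\|_X^q\le C(\|Qu\|_X^q+\|Pu\|_X^q)$ before applying the one-variable argument, whereas you keep $(\sigma^2+t^2)^{q/2}$ intact; both reach the same conclusion for the same reason.
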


 \begin{proof}
Since $u\in X= Y\bigoplus Z$ with $Z= Y^\perp$, we may set $u=Qu+Pu$ with $Qu\in {Z}$ and
 $Pu\in Y$. Then,
using a very similar calculations in \eqref{HHH},
we reach
$$
\begin{aligned}
\varphi(u)& =\frac{1}{2}\|Qu\|^2_X-\frac{1}{2}\|Pu\|^2_X
-\frac{1}{4}\Psi(u)-\frac{1}{q}\int_{\mathbb{R}^{N}}g(x)|u|^{q}\:dx \\
&\leq\frac{1}{2}\|Qu\|^2_X-\frac{1}{2}\|Pu\|^2_X+C\|u\|_X^q\\
&\leq\frac{1}{2}\|Qu\|^2_X+C\|Qu\|_X^{q}-\frac{1}{2}\|Pu\|^2_X+C\|Pu\|_X^{q}.
\end{aligned}
$$
Since $q<2$ in \eqref{HHH}, $-\frac{1}{2}\|Pu\|_X^2+C\|Pu\|_X^q$ is bounded from above.
By \eqref{tau}, we have $\|Qu\|_X\leq\|u\|_\tau\leq\sigma$, thus there exists a $C_\sigma<\infty$ such
that $\sup\limits_{\|u\|_\tau\leq\sigma}\varphi(u)<C_{\sigma}$.
The proof is completed
\end{proof}

With Lemmas \ref{linking1}, \ref{linking2} and \ref{linking3} in hands,
we derive that the variational functional $\varphi$ has infinitely many nontrivial critical points.

\begin{lemma}\label{solution}
Assume $(H_1)-(H_2)$ and \textbf{\emph{one}} of $(H_3)-(H_5)$ hold,
then $\varphi$ has infinitely many nontrivial critical points $\{u_n\}\subset H^s(\R^3)$
such that $\lim\limits_{n\to\infty}\|u_n\|_{H^s(\R^3)}=+\infty$.
 \end{lemma}

 \begin{proof}
Let $\varphi=\Phi$ be as in Proposition \ref{Gu}, and clearly $\varphi$ is even.
Owing to Lemmas \ref{Psi}-(ii) and \ref{PSc}, we clearly know that $\varphi$
satisfies $(PS)_c$ condition and
$\nabla \varphi$ is weakly sequentially continuous.
On the other hand, we have validated
the conditions $(A_1)$, $(A_2)$, $(A_3)$ and $(A_4)$ for $\varphi$
in Lemmas \ref{linking1}, \ref{linking2} and \ref{linking3}, respectively.
As a consequence of Proposition \ref{Gu}, we can finish the proof of this lemma.
 \end{proof}

At this stage, we can conclude the proofs of Theorems \ref{maintheorem1}, \ref{maintheorem2}
and \ref{maintheorem3} depending on Lemma \ref{solution}.





\section*{Acknowledgments}
The work was partially carried out while the third author was visiting Professor
Juncheng Wei at University of British Columbia and he acknowledges the
warm hospitality from Department of Mathematics.









\medskip
Received xxxx 20xx; revised xxxx 20xx; early access xxxx 20xx.
\medskip


\begin{thebibliography}{99}

\bibitem{AdamsHedberg}
\newblock D.R. Adams, L.I. Hedberg, Function spaces and potential theory,
\newblock Grundlehren der
\newblock  Mathematischen Wissenschaften 314. \emph{Springer-Verlag}, Berlin, 1996.

\bibitem{AS}
\newblock C.O. Alves, L. Shen,
\newblock On existence of solutions for some classes of elliptic problems with supercritical exponential growth,
\newblock {\it Math. Z.,} (2024), 306:29.

\bibitem{Bisci}
\newblock G. Bisci, V.D. R\u{a}dulescu,
\newblock  Ground state solutions of scalar field fractional Sch\"{o}rdinger equations,
\newblock {\it Calc. Var. Partial Differential Equations,} {\bf54} (2015), 2985--3008.

\bibitem{Caffarelli}
\newblock L. Caffarelli, L. Silvestre,
\newblock An extension problem related to the fractional Laplacian,
\newblock {\it Comm. Partial Differential Equations,} {\bf32} (2007), 1245--1260.

\bibitem{DiNezzaPalatucciValdinoci}
\newblock E. Di Nezza, G. Palatucci, E. Valdinoci,
\newblock Hitchhiker's guide to the fractional Sobolev spaces,
\newblock {\it Bull. Sci. Math.,} {\bf136} (2012), 521--573.


\bibitem{EE}
\newblock D.E. Edmunds, W.D. Evans,
\newblock Spectral theory and differential operators,
\newblock The Clarendon Press, Oxford University
Press, New York, 1987.

\bibitem{Egorov}
\newblock Y. Egorov, V. Kondratiev,
\newblock On Spectral Theory of Elliptic
Operators,
\newblock \emph{Birkh\"{a}user}, Basel, 1996.

\bibitem{FallFelli}
\newblock M.M. Fall, V. Felli,
\newblock  Unique continuation properties for relativistic Schr\"{o}dinger operators with a singular potential,
\newblock {\it Discrete Contin. Dyn. Syst.,} {\bf35} (2015), no. 12, 5827--5867.

\bibitem{Felmer}
\newblock  P. Felmer, A. Quaas, J. Tan,
\newblock Positive solutions of the nonlinear Schr\"{o}dinger equation
 with the fractional
Laplacian,
\newblock {\it Proc. Roy. Soc. Edinburgh Sect. A,} {\bf142} (2012), 1237--1262.

\bibitem{Felmer2}
\newblock P. Felmer, I. Vergara,
\newblock Scalar field equation with non-local diffusion,
\newblock {\it Nonlinear Differ. Equ. Appl.,} {\bf22} (2015), 1411--1428.

\bibitem{Gu}
\newblock L. Gu, H. Zhou,
\newblock An improved fountain theorem and its application,
\newblock {\it Adv. Nonlinear Stud.,} {\bf17} (2017), 727--738.

\bibitem{He}
\newblock X. He, W. Zou,
\newblock Existence and concentration result for the fractional Schr\"{o}dinger equations
with critical
nonlinearities,
\newblock {\it Calc. Var. Partial Differential Equations,} {\bf55} (2016), 1--39.

\bibitem{Ikoma}
\newblock N. Ikoma,
\newblock Existence of solutions of scalar field equations with fractional operator,
\newblock {\it J. Fixed Point Theory Appl.,} {\bf19} (2017), 649--690.


\bibitem{Kryszewski}
\newblock W. Kryszewski, A. Szulkin,
\newblock Generalized linking theorem with an application
 to a semilinear Schr\"{o}dinger equation,
 \newblock  {\it Adv. Differential Equations,} {\bf3} (1998), 441--472.

\bibitem{Laskin1}
\newblock N. Laskin,
\newblock Fractional quantum mechanics and L\'{e}vy path integrals,
\newblock {\it Phys. Lett. A,} {\bf268} (2000), 298--305.

\bibitem{Laskin2}
\newblock N. Laskin,
\newblock Fractional Schr\"{o}dinger equation, {\it Phys. Rev. E,} {\bf66} (2002), 56--108.

\bibitem{Lieb}
\newblock E.H. Lieb, M. Loss,
\newblock  Analysis,
 Graduate Studies in Mathematics, vol. 14,
 \newblock American Mathematical Society, Providence, RI,
1997.

\bibitem{Pankov}
\newblock A. Pankov,
\newblock Periodic nonlinear Schr\"{o}dinger equation with application to photonic crystals,
\newblock {\it Milan J. Math.,}
 {\bf73} (2005),
259--287.

\bibitem{Rabinowitz}
\newblock P.H. Rabinowitz,
\newblock On a class of nonlinear Schr\"{o}dinger equations,
\newblock {\it  Z. Angew. Math. Phys.,} {\bf43} (1992), 270--291.

\bibitem{Secchi1}
\newblock S. Secchi,
\newblock Concave-convex nonlinearities for some nonlinear fractional equations involving the
Bessel operator,
\newblock {\it Complex Var. Elliptic Equ.,} {\bf62} (2017), 654--669.

\bibitem{Secchi2}
\newblock S. Secchi,
\newblock On some nonlinear fractional equations involving the Bessel operator,
\newblock {\it J. Dynam. Differential Equations,} {\bf29} (2017), 1173--1193.

\bibitem{SSecchi}
\newblock S. Secchi,
\newblock Ground state solutions for nonlinear fractional Schr\"{o}dinger equations in $\R^N$,
\newblock {\it J. Math. Phys.,} {\bf54} (2013), 031501.

\bibitem{Shen}
\newblock L. Shen,
\newblock Multiplicity and concentration results for fractional Schr\"{o}dinger-Poisson
 systems involving a Bessel operator,
 \newblock  {\it Math. Methods Appl. Sci.,} {\bf41} (2018), 7599--7611.

\bibitem{Stein}
\newblock E.M. Stein,
\newblock Singular integrals and differentiability properties of functions,
\newblock  Princeton Mathematical Series, No. 30. Princeton University Press, Princeton, N.J. 1970.

\bibitem{SW2}
\newblock A. Szulkin, T. Weth,
\newblock  Ground state solutions for some indefinite variational
problems,
\newblock  {\it J. Funct. Anal.,} {\bf257} (2009), 3802--3822.

\bibitem{SW3}
\newblock A. Szulkin, T. Weth,
\newblock The method of Nehari manifold,
\newblock Handbook of nonconvex analysis and applications, 597--632,
Int. Press, Somerville, MA, 2010.

\bibitem{Ziemer}
\newblock W.P. Ziemer,
\newblock Weakly Differentiable Functions,
\newblock \emph{Springer}, New York (1989).



\end{thebibliography}
\end{document}